\numberwithin{equation}{section}
\def\C{{\mathbb C}}
\def\F{{\mathbb F}}
\def\Q{{\mathbb Q}}
\def\R{{\mathbb R}}
\def\Z{{\mathbb Z}}
\newtheorem{theorem}{Theorem}[section]
\newtheorem{lemma}[theorem]{Lemma}
\newtheorem{proposition}[theorem]{Proposition}
\newtheorem{corollary}[theorem]{Corollary}
\newtheorem{definition and lemma}[theorem]{Definition and Lemma}
\theoremstyle{definition}
\newtheorem{definition}[theorem]{Definition}
\theoremstyle{remark}
\newtheorem{remark}[theorem]{Remark}
\numberwithin{equation}{section}
\date{}
\begin{document}
\author{WonTae Hwang}

\title{Automorphism groups of simple polarized abelian varieties of odd prime dimension over finite fields}

\maketitle

\begin{abstract}
We prove that the automorphism groups of simple polarized abelian varieties of odd prime dimension over finite fields are cyclic, and give a complete list of finite groups that can be realized as such automorphism groups.
\end{abstract}

\section{Introduction}
Let $k$ be a finite field, and let $X$ be an abelian variety of dimension $g$ over $k.$ We denote the endomorphism ring of $X$ over $k$ by $\textrm{End}_k(X)$. It is a free $\Z$-module of rank $\leq 4g^2.$ We also let $\textrm{End}^0_k(X)=\textrm{End}_k(X) \otimes_{\Z} \Q.$ This $\Q$-algebra $\textrm{End}_k^0(X)$ is called the endomorphism algebra of $X$ over $k.$ Then $\textrm{End}_k^0(X)$ is a finite dimensional semisimple algebra over $\Q$ with $\textrm{dim}_{\Q} \textrm{End}_k^0(X) \leq 4g^2.$ Moreover, if $X$ is simple, then $\textrm{End}_k^0(X)$ is a division algebra over $\Q$. Now, it is well-known that $\textrm{End}_k (X)$ is a $\Z$-order in $\textrm{End}_k^0(X).$ The group $\textrm{Aut}_k(X)$ of the automorphisms of $X$ over $k$ is not finite, in general. But if we fix a polarization $\mathcal{L}$ on $X$, then the group $\textrm{Aut}_k(X,\mathcal{L})$ of the automorphisms of the polarized abelian variety $(X,\mathcal{L})$ is finite. The goal of this paper is to classify all finite groups that can be realized as the automorphism group $\textrm{Aut}_k(X,\mathcal{L})$ of a simple polarized abelian variety $(X,\mathcal{L})$ of odd prime dimension over a finite field $k.$ \\

Along this line, the author \cite{6} gave such a classification for arbitrary abelian surfaces over finite fields. In this paper, as next step toward goal of considering higher dimensional cases, we will prove that the automorphism groups of simple polarized abelian varieties of odd prime dimension over finite fields are cyclic, and give an explicit description of such groups.  \\

One of our main results is the following
\begin{theorem}\label{main theorem1}
  The automorphism groups of simple polarized abelian varieties $(X, \mathcal{L})$ of odd prime dimension over finite fields are cyclic.
\end{theorem}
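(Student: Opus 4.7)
The plan is to use Honda--Tate theory to classify the endomorphism algebra $E := \textrm{End}^0_k(X)$, and then to analyze the finite subgroup $G := \textrm{Aut}_k(X,\mathcal{L}) \subseteq E^\times$. Writing $F = \Q(\pi)$ for the center of $E$ (where $\pi$ is the Frobenius) and $d^2 = [E:F]$, one has $[F:\Q]\cdot d = 2g$. A Weil $q$-number is either non-real (so that $F$ is a CM field) or real (so that $[F:\Q] \leq 2$), and the purely real cases yield only $1$-dimensional simple abelian varieties. Combining these with the odd-primality of $g$ leaves exactly two possibilities: \textbf{(A)} $d = 1$ and $E = F$ is a CM field of degree $2g$ over $\Q$; or \textbf{(B)} $d = g$, $F$ is an imaginary quadratic field, and $E$ is a division algebra of index $g$ over $F$.

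In Case~\textbf{(A)}, $G$ is a finite subgroup of $F^\times$ and is therefore cyclic. More precisely, the Rosati involution restricts to complex conjugation on the CM field $F$, so $G$ lies in the norm-one units $\{\alpha \in \textrm{End}_k(X)^\times : \alpha\overline{\alpha} = 1\}$; since each such integral element has $|\alpha|_v = 1$ at every archimedean place of $F$, Kronecker's theorem identifies $G$ with a subgroup of $\mu(F)$, the cyclic group of roots of unity in $F$.

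For Case~\textbf{(B)}, I would reduce to proving that $G$ is abelian, since any finite abelian subgroup of a division ring lies inside the subfield it generates, and finite subgroups of fields are cyclic. Let $\Q[G] \subseteq E$ denote the division subalgebra generated by $G$, with center $K$, and set $L := KF \subseteq E$; because $F$ is central, $L$ is a subfield of $E$. By the centralizer theorem for a CSA of prime index, $[L:F]$ divides $g$, so $[L:F] \in \{1, g\}$. If $[L:F] = g$, then $L$ is a maximal commutative subfield with $C_E(L) = L$, so $\Q[G] \subseteq C_E(K) = C_E(L) = L$ is commutative; hence $G$ is abelian. Otherwise $L = F$, so $K \subseteq F$, and if $\Q[G]$ is non-commutative then $\Q[G] \otimes_K F \cong E$ realizes $\Q[G]$ as a $K$-central division algebra of index $g$ containing $G$.

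The main obstacle is ruling out this last scenario. By Amitsur's classification of finite subgroups of division rings, any non-cyclic $G$ either (i) contains a quaternion subgroup $Q_8$ (as in the binary polyhedral cases $2T,2O,2I$ or in $Q_{4m}$ for $m$ even), or (ii) is a non-abelian $Z$-group whose $2$-Sylow is cyclic (typified by dicyclic groups $Q_{4m}$ with $m$ odd). In case (i), $\Q[Q_8] \cong \mathbb{H}_\Q \subseteq \Q[G]$ and so $\mathbb{H}_\Q \otimes_\Q K$ is a $K$-central subalgebra of $\Q[G]$ of degree $2$; the double centralizer theorem then forces $4 \mid [\Q[G]:K] = g^2$, contradicting $g$ odd. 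In case (ii), the faithful irreducible summand of $\Q[G]$ is a cyclotomic cyclic algebra whose center $K_\chi = \Q(\chi)$ has a definite degree over $\Q$; the same centralizer analysis forces $[K_\chi:\Q] \mid g$, and one then checks, using Galois-theoretic restrictions on $K_\chi$ coming from the Amitsur classification together with the positivity of the Rosati involution (constraining $G$ to a compact unitary subgroup of $E\otimes\R$), that no such $G$ actually embeds in $E$. Handling this family of $Z$-groups is the technical heart of the argument; once complete it yields $G$ cyclic in Case~\textbf{(B)} as well, finishing the proof.
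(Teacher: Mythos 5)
Your overall strategy coincides with the paper's: classify $\textrm{End}^0_k(X)$ as either a CM field of degree $2g$ or a division algebra of degree $g$ over an imaginary quadratic field (this is Lemma \ref{poss end alg}), then show every finite subgroup of its unit group is cyclic. Your Case (A) is fine, and so is the exclusion of subgroups containing $Q_8$ in Case (B) via the double centralizer theorem and the oddness of $g$. But the proof is not complete: the decisive subcase --- a non-abelian group with all Sylow subgroups cyclic (Amitsur's metacyclic groups $G_{m,r}$, typified by dicyclic groups of order $4m$ with $m$ odd) embedded in a division algebra of odd prime degree $g$ whose center is $\Q$ or the imaginary quadratic field $F$ --- is only announced, not carried out: ``one then checks \dots no such $G$ actually embeds'' is exactly the technical heart you identify, and no check is given. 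Moreover, the one concrete constraint you state there is misplaced: by your own reduction the center $K_\chi$ of $\Q[G]$ is contained in $F$, so $[K_\chi:\Q]\leq 2$, and the condition $[K_\chi:\Q]\mid g$ is not the operative restriction; also, positivity of the Rosati involution buys nothing beyond finiteness of $G$, since any finite subgroup of $(E\otimes_\Q\R)^\times$ already lies in a maximal compact subgroup. (A minor slip elsewhere: real Weil numbers give simple varieties of dimension $1$ or $2$, not only $1$; harmless here since $g\geq 3$.)

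For comparison, the paper closes precisely this case with Amitsur's quantitative data rather than general structure theory: if a non-cyclic $G_{m,r}$ embeds in $D^\times$, then the parameter $n$ divides $g$, hence $n=g$, and then $g^2$ divides $|G|=mn$, so $g\mid m$; on the other hand the cyclic subgroup $\langle a\rangle$ of order $m$ generates a subfield $\Q(\zeta_m)\subseteq D$, whence $\varphi(m)\mid 2g$. These two constraints leave only finitely many $m$ (for $g=3$ only $m\in\{6,18\}$ survive; for $g\geq 5$ the order count already rules everything out), and each surviving $m$ is eliminated using the conditions (C1)/(C2) of Theorem \ref{thm 11} together with $\gcd(m,r)=1$ and the minimality of $n$; this is Lemma \ref{cyc lem} and Theorem \ref{thm 18}, and the evenness hypothesis there is harmless because $-1\in\textrm{Aut}_k(X,\mathcal{L})$. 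To finish your Case (B)(ii) you need an argument of exactly this kind --- cyclotomic degree bounds coming from the cyclic normal subgroup combined with Amitsur's embeddability criteria, or an explicit analysis of the cyclic algebra generated by $G_{m,r}$ --- and supplying it essentially reproduces the paper's lemma; without it, the claim that $G$ is cyclic in Case (B) remains unproved.
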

More precisely, we will prove the following result.
\begin{theorem}\label{main theorem2}
  The possibilities for automorphism groups of a simple polarized abelian variety $(X, \mathcal{L})$ of odd prime dimension over a finite field is given by Table 2 below.
\end{theorem}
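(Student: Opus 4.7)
The plan is to combine the Honda--Tate classification of endomorphism algebras with constraints coming from the positive Rosati involution and the arithmetic of division algebras. Write $D = \textrm{End}^0_k(X)$, $F = \mathbb{Q}(\pi) = Z(D)$ for the $q$-Frobenius $\pi$, and $\dagger$ for the Rosati involution attached to $\mathcal{L}$; then $\textrm{Aut}_k(X, \mathcal{L}) = \{u \in \textrm{End}_k(X) : u u^\dagger = 1\}$ is a finite subgroup of $D^\times$ consisting of torsion units. Combining the Honda--Tate identity $[F:\mathbb{Q}] \cdot d = 2p$ (with $d^2 = [D:F]$) with Albert's classification of endomorphism algebras of abelian varieties, the simplicity of $X$ together with the odd primality of $p$ forces $D$ to be of Albert Type~IV, leaving only two possibilities: (i) $D = F$ is a CM field of degree $2p$; and (ii) $F$ is imaginary quadratic and $D$ is a central division algebra of degree $p^2$ over $F$.

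For cyclicity (Theorem \ref{main theorem1}), case (i) is immediate, since any finite subgroup of the multiplicative group of a field is cyclic. In case (ii) I would invoke Amitsur's classification of finite subgroups of division rings and eliminate the non-cyclic candidates in turn. The binary polyhedral groups and the generalized quaternion groups $Q_{2^n}$ ($n \ge 3$) all contain $Q_8$, whose $\mathbb{Q}$-linear span inside $D$ would be a Hamilton quaternion subalgebra $\mathbb{H}$; since $D$ has odd prime index $p$ over its imaginary quadratic center, the double centralizer theorem forbids such an embedding. The remaining non-cyclic exceptional groups in Amitsur's list are non-abelian metacyclic groups $G = C_m \rtimes_r C_n$ with $C_n$ acting faithfully on $C_m$. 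If such a $G$ embedded in $D^\times$ with generators $g$ of order $m$ and $h$ of order $n$, then the subalgebra $\mathbb{Q}[g,h] \subseteq D$ would be isomorphic to the cyclic algebra $\bigl(\mathbb{Q}(\zeta_m)/\mathbb{Q}(\zeta_m)^{\langle \sigma \rangle}, \sigma, h^n\bigr)$ with $\sigma(\zeta_m) = \zeta_m^r$; the relation $h^n = 1$ forces this cyclic algebra to be split and hence isomorphic to a matrix algebra, contradicting the fact that every subalgebra of a division ring is an integral domain. Thus $\textrm{Aut}_k(X,\mathcal{L})$ is cyclic.

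For Theorem \ref{main theorem2}, write $\textrm{Aut}_k(X, \mathcal{L}) = \langle \zeta_m \rangle$. Since $\mathbb{Q}(\zeta_m) \hookrightarrow D$, we have $\varphi(m) \mid 2p$; as $\varphi(m)$ is even for $m \ge 3$ and $p$ is odd, this forces $\varphi(m) \in \{1, 2, 2p\}$, giving $m \in \{1,2,3,4,6\}$ or $\varphi(m) = 2p$ (a small explicit set depending on $p$, for instance $m = 2p+1$ when this is prime, $m = 4p$, and similar). For each admissible $m$ I would verify realizability by exhibiting a Weil $q$-number $\pi$ such that $\mathbb{Q}(\zeta_m)$ embeds into the Honda--Tate algebra attached to $\pi$, and by choosing a polarization $\mathcal{L}$ with respect to which $\zeta_m$ is $\dagger$-unitary. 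The surviving values of $m$ will populate Table~2. The main obstacle is expected to lie in this realizability step: showing that every cyclic group permitted by the divisibility $\varphi(m) \mid 2p$ actually arises as $\textrm{Aut}_k(X, \mathcal{L})$ for some simple polarized abelian variety of dimension $p$ over some finite field requires an explicit case analysis via Honda--Tate on the residue characteristic of $k$ and on the local invariants of $D$ in case (ii), and that is where the bulk of the bookkeeping will go.
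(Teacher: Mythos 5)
There are two genuine gaps. First, your cyclicity argument in case (ii) misidentifies the non-cyclic candidates in Amitsur's classification. After discarding the groups containing $Q_8$ (your double-centralizer argument is fine for those), the remaining candidates are Amitsur's groups $G_{m,r}$, defined by $a^m=1$, $b^n=a^t$, $bab^{-1}=a^r$; these are in general \emph{non-split} metacyclic extensions with $b^n=a^t\neq 1$ (e.g.\ the dicyclic groups, such as $\textrm{Dic}_{12}$, whose Sylow $2$-subgroup is $\Z/4\Z$ and which therefore contain no copy of $Q_8$). Your splitting argument for the cyclic algebra $\bigl(\Q(\zeta_m)/\Q(\zeta_m)^{\langle\sigma\rangle},\sigma,h^n\bigr)$ uses the relation $h^n=1$, so it only eliminates split semidirect products $C_m\rtimes C_n$ --- groups that never embed in any division ring anyway --- and leaves untouched exactly the dangerous candidates that do embed in division rings. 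The paper's route through this point is different and is what you are missing: for $G_{m,r}\subseteq D^\times$ with $D$ of degree $g$ over an imaginary quadratic center, Amitsur's theory gives $n\mid g$ (so $n=1$ or $n=g$) and, when $n=g$, that $g^2$ divides $|G_{m,r}|$; combined with $\varphi(m)\mid 2g$ this kills all non-cyclic possibilities by a short case analysis (Lemma \ref{cyc lem}, Theorem \ref{thm 18}).

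Second, the realizability half of Theorem \ref{main theorem2} is only a plan, and two essential ingredients are absent. To make $\zeta_m$ an actual endomorphism you need an abelian variety in the isogeny class whose endomorphism ring is a (maximal) order containing $\Z[\zeta_m]$ --- this is Waterhouse's theorem (Proposition \ref{endposs}), which the paper uses with explicit Weil numbers such as roots of $t^2+19t+6859$ or $\pi=(2g+1)\zeta_{4g+2}$. More seriously, averaging a polarization over $\langle\zeta_m\rangle$ only yields $\langle\zeta_m\rangle\subseteq\textrm{Aut}_k(X',\mathcal{L}')$; to get equality one must bound the automorphism group from above, which the paper does by showing $\langle\zeta_m\rangle$ is a \emph{maximal} finite subgroup of $D^\times$ for the specific $D$ constructed, using local embedding criteria for fields in division algebras (via Propositions \ref{local inv} and \ref{index end alg} and the Linowitz--Shemanske embedding theorem). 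Your proposal never addresses this upper bound. Finally, your constraint $\varphi(m)\mid 2p$ also allows odd-order groups ($m=1,3$, and e.g.\ $m=7,9$ for $p=3$, or $m=2p+1$), which do not appear in Table 2; excluding them requires the observation that $-1$ always preserves a polarization, so $\textrm{Aut}_k(X,\mathcal{L})$ has even order --- a fact you never invoke. (Also, $m=4p$ is not a solution of $\varphi(m)=2p$.)
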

For more details, see Corollary \ref{cor 19} and Theorem \ref{main thm}. \\

This paper is organized as follows: In Section~\ref{prelim}, we introduce several facts which are related to our desired classification. Explicitly, we will recall some facts about endomorphism algebras of simple abelian varieties ($\S$\ref{end alg av}), the theorem of Tate ($\S$\ref{thm Tate sec}), Honda-Tate theory ($\S$\ref{thm Honda}), and a result of Waterhouse ($\S$\ref{thm waterhouse}). In Section~\ref{findiv}, we find all the finite groups that can be embedded in certain division algebras using a paper of Amitsur \cite{1}. In Section~\ref{main}, we finally obtain the desired classification using the facts that were introduced in the previous sections, and give another interesting result (see Theorem \ref{application} below). \\

In the sequel, let $g \geq 3$ be a prime number, and let $q=p^a$ for some prime number $p$ and an integer $a \geq 1,$ unless otherwise stated. Also, let $\overline{k}$ denote an algebraic closure of $k.$

\section{Preliminaries}\label{prelim}
In this section, we recall some of the facts in the general theory of abelian varieties over a field. Our main references are \cite{2} and \cite{8}.

\subsection{Endomorphism algebras of simple abelian varieties of odd prime dimension over finite fields}\label{end alg av}
In this section, we classify all the possible endomorphism algebras of simple abelian varieties of odd prime dimension over finite fields. \\

Let $X$ be a simple abelian variety of dimension $g$ over a finite field $k.$ Then it is well-known that $\textrm{End}_k^0(X)$ is a division algebra over $\Q$ with $2g \leq \textrm{dim}_{\Q} \textrm{End}_k^0(X) < (2g)^2.$ Before giving our first result, we also recall Albert's classification. We choose a polarization $\lambda : X \rightarrow \widehat{X}$ where $\widehat{X}$ denotes the dual abelian variety of $X$. Using the polarization $\lambda,$ we can define an involution, called the \emph{Rosati involution}, $^{\vee}$ on $\textrm{End}_k^0(X).$ (For a more detailed discussion about the Rosati involution, see \cite[\S20]{8}.) In this way, to the pair $(X,\lambda)$ we associate the pair $(D, ^{\vee})$ with $D=\textrm{End}_k^0(X)$ and $^{\vee}$, the Rosati involution on $D$. We know that $D$ is a simple division algebra over $\Q$ of finite dimension and that $^{\vee}$ is a positive involution. Let $K$ be the center of $D$ so that $D$ is a central simple $K$-algebra, and let $K_0 = \{ x \in K~|~x^{\vee} = x \}$ be the subfield of symmetric elements in $K.$ By a theorem of Albert, the pair $(D,^{\vee})$ is of one of the following four types: \\

(i) Type I: $K_0 = K=D$ is a totally real field and $^{\vee} = \textrm{id}_D.$ \\

(ii) Type II: $K_0 = K$ is a totally real field, and $D$ is a quaternion algebra over $K$ with $D\otimes_{K,\sigma} \R \cong M_2(\R)$ for every embedding $\sigma : K \hookrightarrow \R.$ Now, let $a \mapsto \textrm{Trd}_{D/K}(a)-a$ be the canonical involution on $D.$ Then there is an element $b \in D$ such that $b^2 \in K$ is totally negative, and such that $a^{\vee}=b (\textrm{Trd}_{D/K}(a)-a) b^{-1}$ for all $a \in D.$ We have an isomorphism $D \otimes_{\Q}\R \cong \prod_{\sigma : K \hookrightarrow \R} M_2(\R)$ such that the involution $^{\vee}$ on $D \otimes_{\Q} \R$ corresponds to the involution $(A_1, \cdots, A_e) \mapsto (A_1^t , \cdots, A_e^t)$ where $e=[K :\Q].$ \\

(iii) Type III: $K_0 = K$ is a totally real field, and $D$ is a quaternion algebra over $K$ with $D\otimes_{K,\sigma} \R \cong \mathbb{H}$ for every embedding $\sigma : K \hookrightarrow \R$ (where $\mathbb{H}$ is the Hamiltonian quaternion algebra over $\R$). Now, let $a \mapsto \textrm{Trd}_{D/K}(a)-a$ be the canonical involution on $D.$ Then $^{\vee}$ is equal to the canonical involution on $D.$ We have an isomorphism $D \otimes_{\Q}\R \cong \prod_{\sigma : K \hookrightarrow \R} \mathbb{H}$ such that the involution $^{\vee}$ on $D \otimes_{\Q} \R$ corresponds to the involution $(\alpha_1, \cdots, \alpha_e) \mapsto (\overline{\alpha_1} , \cdots, \overline{\alpha_e})$ where $e=[K :\Q].$ \\

(iv) Type IV: $K_0 $ is a totally real field, $K$ is a totally imaginary quadratic field extension of $K_0$. Write $a \mapsto \overline{a}$ for the unique non-trivial automorphism of $K$ over $K_0$; this automorphism is usually referred to as complex conjugation. If $\nu$ is a finite place of $K$, write $\overline{\nu}$ for its complex conjugate. The algebra $D$ is a central simple algebra over $K$ such that: (a) If $\nu$ is a finite place of $K$ with $\nu = \overline{\nu},$ then $\textrm{inv}_{\nu}(D)=0$; (b) For any place $\nu$ of $K$, we have $\textrm{inv}_{\nu}(D)+\textrm{inv}_{\overline{\nu}}(D)=0$ in $\Q/\Z.$ If $d$ is the degree of $D$ as a central simple $K$-algebra, then we have an isomorphism $D \otimes_{\Q}\R \cong \prod_{\sigma : K_0 \hookrightarrow \R} M_d(\C)$ such that the involution $^{\vee}$ on $D \otimes_{\Q} \R$ corresponds to the involution $(A_1, \cdots, A_{e_0}) \mapsto (\overline{A_1}^t , \cdots, \overline{A_{e_0}}^t)$ where $e_0=[K_0 :\Q].$ \\

Keeping the notations as above, we let
\begin{equation*}
  e_0= [K_0 :\Q],~~~e=[K:\Q],~~~\textrm{and}~~~d=[D:K]^{\frac{1}{2}}.
\end{equation*}

As our last preliminary fact of this section, we impose some numerical restrictions on those values $e_0, e,$ and $d$ in the next table, following \cite[\S21]{8}.
\begin{center}
  \begin{tabular}{|c|c|c|}
\hline
$$ & $\textrm{char}(k)=0$ & $\textrm{char}(k)=p>0$ \\
\hline
$\textrm{Type I}$ & $e|g$ & $e|g$  \\
\hline
$\textrm{Type II}$ & $2e|g$  & $2e|g$ \\
\hline
$\textrm{Type III}$ & $2e|g$ & $e|g$ \\
\hline
$\textrm{Type IV}$ & $e_0 d^2 |g$ &$ e_0 d |g $ \\
\hline
\end{tabular}
\vskip 4pt
\textnormal{Table 1}
\end{center}

Now, we are ready to introduce the following
\begin{lemma}\label{poss end alg}
Let $X$ be a simple abelian variety of dimension $g$ over a finite field $k=\F_q$, and let $\lambda : X \rightarrow \widehat{X}$ be a polarization. Then $D:=\textrm{End}_k^0(X)$ (together with the Rosati involution $^\vee$ corresponding to $\lambda$) is of one of the following two types: \\
(1) $D$ is a totally imaginary quadratic extension field of a totally real field of degree $g$; \\
(2) $D$ is a central simple division algebra of degree $g$ over a totally imaginary quadratic field.
\end{lemma}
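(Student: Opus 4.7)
The plan is to apply Albert's four-type classification (just recalled in the excerpt) and to eliminate or refine each case using the numerical restrictions of Table~1 (the characteristic-$p$ column), the inequality $2g\leq\dim_\Q D$, and the fact that $g$ is an odd prime.

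First I would rule out Types I, II, and III. Type I is immediate: $D=K$ is a field with $e=\dim_\Q D\geq 2g$, which is incompatible with the required $e\mid g$. Type II demands $2e\mid g$, impossible for an odd prime. Type III imposes $e\mid g$, so primality forces $e\in\{1,g\}$; if $e=1$ then $\dim_\Q D=4<2g$, a contradiction once $g\geq 3$. The remaining subcase $e=g$ cannot be excluded numerically, and here I would invoke Tate's theorem (§\ref{thm Tate sec}), which identifies the center of $D$ with $\Q(\pi)$ for the Frobenius $\pi$. Since $K=\Q(\pi)$ is totally real of degree $g$, $\pi$ would be a totally real Weil $q$-number, which forces $\pi=\pm\sqrt{q}$ and hence $[\Q(\pi):\Q]\leq 2$, contradicting $g\geq 3$.

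Next I would treat the surviving Type~IV case. The constraint $e_0 d\mid g$ together with the primality of $g$ gives $e_0 d\in\{1,g\}$. The value $1$ is excluded because $\dim_\Q D=2e_0 d^2=2<2g$, so $e_0 d=g$. Primality then splits this into $(e_0,d)=(g,1)$, which yields $D=K$, a CM field of degree $2g$ over $\Q$, giving case (1) of the lemma; and $(e_0,d)=(1,g)$, which yields $D$ as a central simple division algebra of degree $g$ over an imaginary quadratic field $K$, giving case (2).

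The main obstacle is the Type~III subcase with $e=g$: the divisibility in Table~1 by itself is not enough, and one must combine Tate's identification of the center of the endomorphism algebra with the Weil bound $|\sigma(\pi)|=\sqrt{q}$ at every archimedean embedding $\sigma$ to show that a totally real center is forced to have degree at most $2$. Once this is done, every other step is a short case analysis that becomes immediate after trivializing the divisibility conditions via the primality of $g$.
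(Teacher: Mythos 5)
Your proof is correct, and its core coincides with the paper's: the only genuinely delicate point, Type~III with $e=g$, is handled in both by identifying the totally real center with $\Q(\pi_X)$ (Tate) and using the Weil condition $|\sigma(\pi_X)|=\sqrt{q}$ to force $[\Q(\pi_X):\Q]\leq 2$. Where you diverge is in the bookkeeping around it. The paper first invokes the consequence of Tate's theorem that $X$ is of CM-type (Corollary~\ref{cor TateEnd0}), which discards Types~I and~II at once, and then only needs to exclude Type~III; the refinement of Type~IV into cases (1) and (2) is left implicit. You instead eliminate Types~I and~II purely numerically from Table~1 together with $2g\leq\dim_\Q D$ and the oddness/primality of $g$, which avoids relying on the CM-type statement, and you carry out the Type~IV analysis explicitly: $e_0 d\mid g$ with $e_0 d=1$ excluded by $\dim_\Q D=2e_0d^2\geq 2g$, and primality splitting $e_0 d=g$ into $(e_0,d)=(g,1)$ (a CM field of degree $2g$, case (1)) and $(e_0,d)=(1,g)$ (a degree-$g$ division algebra over an imaginary quadratic field, case (2)). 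The paper's route is shorter because the CM-type fact does the elimination wholesale; yours is slightly more self-contained at the Albert-table level and has the merit of recording the Type~IV case split that the lemma's statement actually asserts.
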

\begin{proof}
First, we recall that $X$ is of CM-type (see Corollary \ref{cor TateEnd0} below), and hence, either $D$ is of Type III or Type IV in Albert's classification. It suffices to show that $D$ is not of Type III. To this aim, suppose that $D$ is of Type III. By Table 1, we know that $e|g.$ If $e=1,$ then $\textrm{dim}_{\Q} D =4,$ which contradicts the fact that $2g \leq \textrm{dim}_{\Q} D$ and $g \geq 3.$ If $e=g,$ then $D$ is a quaternion algebra over a totally real field $K=\Q(\pi_X)$ of degree $g$ where $\pi_X$ denotes the Frobenius endomorphism of $X.$ Since $\pi_X$ is a $q$-Weil number (see Remark \ref{qWeil rem} below), we have $|\pi_X |=\sqrt{q}$ so that $[K : \Q] \leq 2,$ which is also a contradiction. Hence, we can conclude that $D$ cannot be of Type III. \\

This completes the proof.
\end{proof}

\begin{remark}\label{oort rmk}
Let $k$ be an algebraically closed field with $\textrm{char}(k)=p>0.$ Let $X$ be a simple abelian variety of dimension $g$ over $k$, and let $D=\textrm{End}_k^0(X)$. Then $D$ is of one of the following types (see \cite[$\S7$]{10}): \\
(i) $D=\Q$;\\
(ii) $D$ is a totally real field of degree $g$;\\
(iii) $D=D_{p,\infty}$ if $g \geq 5$; \\
(iv) $D$ is a totally imaginary quadratic field;\\
(v) $D$ is a totally imaginary quadratic extension field of a totally real field of degree $g$; \\
(vi) $D$ is a central simple division algebra of degree $g$ over a totally imaginary quadratic field and the $p$-rank of $X$ is $0.$
\end{remark}

\subsection{The theorem of Tate}\label{thm Tate sec}
In this section, we recall an important theorem of Tate, and give some interesting consequences of it. \\

Let $k$ be a field and let $l$ be a prime number with $l \ne \textrm{char}(k)$. If $X$ is an abelian variety of dimension $g$ over $k,$ then we can introduce the Tate $l$-module $T_l X$ and the corresponding $\Q_l$-vector space $V_l X :=T_l X \otimes_{\Z_l} \Q_l.$ It is well-known that $T_l X$ is a free $\Z_l$-module of rank $2g$ and $V_l X$ is a $2g$-dimensional $\Q_l$-vector space. In \cite{11}, Tate showed the following important result:

\begin{theorem}\label{thm Tate}
Let $k$ be a finite field and let $\Gamma = \textrm{Gal}(\overline{k}/k).$ If $l$ is a prime number with $l \ne \textrm{char}(k),$ then we have: \\
(a) For any abelian variety $X$ over $k,$ the representation
\begin{equation*}
 \rho_l =\rho_{l,X} : \Gamma \rightarrow \textrm{GL}(V_l X)
\end{equation*}
is semisimple. \\
(b) For any two abelian varieties $X$ and $Y$ over $k,$ the map
\begin{equation*}
 \Z_l \otimes_{\Z} \textrm{Hom}_k(X,Y) \rightarrow \textrm{Hom}_{\Gamma}(T_l X, T_l Y)
\end{equation*}
is an isomorphism.
\end{theorem}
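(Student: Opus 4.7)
The plan is to reduce part (b) to the case $X = Y$ (by the standard device of replacing the pair with $X \times Y$) and then to prove surjectivity of the natural injection
\[
  \Z_l \otimes_{\Z} \textrm{End}_k(X) \hookrightarrow \textrm{End}_{\Gamma}(T_l X).
\]
Injectivity is immediate: $\textrm{End}_k(X)$ is torsion-free of finite $\Z$-rank, and any nonzero $\phi \in \textrm{End}_k(X)$ acts nontrivially on some $X[l^n]$ and hence on $T_l X$.

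For surjectivity the cornerstone is a finiteness statement: up to $k$-isomorphism, there are only finitely many abelian varieties $Y$ over $k$ admitting a $k$-isogeny $X \to Y$ whose kernel is annihilated by a power of $l$. This rests on two inputs: (i) every such $Y$ can be endowed with a polarization of bounded degree, obtained by pushing forward a fixed polarization on $X$ and, if needed, applying Zarhin's trick to keep principality under control; and (ii) polarized abelian varieties of bounded dimension and polarization degree over a fixed finite field fall into only finitely many isomorphism classes, which is ultimately a consequence of the finiteness of class numbers of $\Z$-orders in the semisimple $\Q$-algebra $\textrm{End}^0_k(X)$.

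Given this finiteness, the strategy is to produce, for every $\Gamma$-stable $W \subset V_l X$, an idempotent $e \in \textrm{End}^0_k(X) \otimes_{\Q} \Q_l$ with $e(V_l X) = W$. Concretely, for a $\Gamma$-stable lattice $M \subset T_l X$ of finite index one associates a sequence of $k$-isogenies $u_n: X \to X_n$ corresponding to the finite subgroup schemes attached to $T_l X / (M + l^n T_l X)$; applied to the finite set of possible $X_n$'s, the pigeonhole principle yields infinitely many $k$-isomorphisms between pairs $X_n \cong X_m$, which patch together into a limit endomorphism $u \in \textrm{End}_k(X) \otimes \Z_l$ realizing $M$. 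Varying $M$ and feeding the output into a Jacobson-density style argument produces every $\Gamma$-equivariant endomorphism of $T_l X$.

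Part (a) then drops out formally from (b): given a $\Gamma$-stable $W \subset V_l X$, the idempotent $e$ produced above exhibits the $\Gamma$-stable decomposition $V_l X = W \oplus (1-e)V_l X$, so every $\Gamma$-subrepresentation has a $\Gamma$-stable complement. The main obstacle is the finiteness-plus-limit step in (b): one must ensure that the polarization degrees truly remain bounded along the chain of isogenies (this is exactly where Zarhin's trick is essential, since an isogeny can spoil principality) and that the pigeonhole isomorphisms can be upgraded into a coherent element of the inverse limit defining $\textrm{End}_k(X) \otimes \Z_l$, rather than just a compatible sequence of abstract isomorphisms.
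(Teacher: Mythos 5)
The paper itself does not prove this statement: it is Tate's theorem and the paper simply cites \cite{11} (and \cite{2}), so the only meaningful comparison is with Tate's argument, whose architecture your outline does follow (finiteness of certain quotients up to isomorphism, the lattice/quotient construction $X_n = X/H_n$, a pigeonhole-plus-limit argument inside $\textrm{End}_k(X)\otimes \Z_l$, idempotents, and a bicommutant/graph argument for surjectivity). The genuine gap is in your justification of the cornerstone finiteness statement, input (i). Polarizations do not push forward along an isogeny $f: X \to Y$; they only pull back, and pulling back a fixed $\lambda$ along a quasi-inverse $g:Y\to X$ with $g\circ f = l^n$ multiplies the degree by $\deg(g)^2$, which is unbounded along your chain, so ``pushing forward a fixed polarization'' does not bound anything. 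Zarhin's trick is also not used the way you describe (to ``keep principality under control'' of a pushed-forward polarization). The two correct routes are: Tate's original one, in which one descends the polarization $l^n\lambda$ through subgroups that are maximal isotropic for the associated Weil pairing, so that the descended polarization on the quotient has the \emph{same} degree as $\lambda$ (and one must arrange the dévissage so that only such subgroups are needed); or the later route via Zarhin's actual statement, that $(Y\times\widehat{Y})^4$ is principally polarizable for every $Y$ in the isogeny class, combined with finiteness of principally polarized abelian varieties of fixed dimension over $k$ and a finiteness-of-direct-factors argument. As written, step (i) fails, and since everything downstream rests on it, this is a real gap rather than a presentational one.

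Two further points are glossed over in a way that matters. First, injectivity of $\Z_l\otimes\textrm{Hom}_k(X,Y)\to \textrm{Hom}_{\Gamma}(T_lX,T_lY)$ is not just ``a nonzero $\phi$ acts nontrivially on $T_lX$'': one needs the divisibility argument (a homomorphism killing $X[l^n]$ is divisible by $l^n$ in $\textrm{Hom}_k(X,Y)$) applied to saturated finitely generated subgroups; the same fact is what makes $\textrm{End}_k(X)\otimes\Z_l$ closed in $\textrm{End}_{\Q_l}(V_lX)$, which your limit step silently uses, and it is used once more to pass from the $\Q_l$-isomorphism to the integral statement (b). Second, (a) does not ``drop out formally from (b)'': a module need not be semisimple just because its commutant is. Both (a) and (b) follow from the key lemma that every $\Gamma$-stable $W\subseteq V_lX$ equals $uV_lX$ for some $u\in\textrm{End}^0_k(X)\otimes\Q_l$, upgraded to an idempotent $e$ with $eV_lX=W$ using semisimplicity of $\textrm{End}^0_k(X)\otimes\Q_l$ (every one-sided ideal is generated by an idempotent), and surjectivity in (b) then requires applying this key lemma to $X\times X$ and the graph of a given $\Gamma$-equivariant map, not merely a density principle. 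These steps are standard, but they are exactly the content of Tate's proof and should be made explicit.
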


Now, we recall that an abelian variety $X$ over a (finite) field $k$ is called \emph{elementary} if $X$ is $k$-isogenous to a power of a simple abelian variety over $k.$ Then, as an interesting consequence of Theorem \ref{thm Tate}, we have the following
\begin{corollary}\label{cor TateEnd0}
  Let $X$ be an abelian variety of dimension $g$ over a finite field $k.$ Then we have:\\
  (a) The center $Z$ of $\textrm{End}_k^0(X)$ is the subalgebra $\Q[\pi_X].$ In particular, $X$ is elementary if and only if $\Q[\pi_X]=\Q(\pi_X)$ is a field, and this occurs if and only if $f_X$ is a power of an irreducible polynomial in $\Q[t]$ where $f_X$ denotes the characteristic polynomial of $\pi_X.$ \\
 (b) Suppose that $X$ is elementary. Let $h=f_{\Q}^{\pi_X}$ be the minimal polynomial of $\pi_X$ over $\Q$. Further, let $d=[\textrm{End}_k^0(X):\Q(\pi_X)]^{\frac{1}{2}}$ and $e=[\Q(\pi_X):\Q].$ Then $de =2g$ and $f_X = h^d.$ \\
(c) We have $2g \leq \textrm{dim}_{\Q} \textrm{End}^0_k (X) \leq (2g)^2$ and $X$ is of CM-type. \\
(d) The following conditions are equivalent: \\
  \indent (d-1) $\textrm{dim}_{\Q} \textrm{End}_k^0(X)=2g$; \\
  \indent (d-2) $\textrm{End}_k^0(X)=\Q[\pi_X]$; \\
  \indent (d-3) $\textrm{End}_k^0(X)$ is commutative; \\
  \indent (d-4) $f_X$ has no multiple root. \\
(e) The following conditions are equivalent: \\
  \indent (e-1) $\textrm{dim}_{\Q} \textrm{End}_k^0(X)=(2g)^2$; \\
  \indent (e-2) $\Q[\pi_X]=\Q$; \\
  \indent (e-3) $f_X$ is a power of a linear polynomial; \\
  \indent (e-4) $\textrm{End}^0_k(X) \cong M_g(D_{p,\infty})$ where $D_{p,\infty}$ is the unique quaternion algebra over $\Q$ that is ramified at $p$ and $\infty$, and split at all other primes; \\
  \indent (e-5) $X$ is supersingular with $\textrm{End}_k(X) = \textrm{End}_{\overline{k}}(X_{\overline{k}})$ where $X_{\overline{k}}=X \times_k \overline{k}$; \\
  \indent (e-6) $X$ is isogenous to $E^g$ for a supersingular elliptic curve $E$ over $k$ all of whose endomorphisms are defined over $k.$
\end{corollary}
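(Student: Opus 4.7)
The overarching plan is to apply Tate's theorem (Theorem~\ref{thm Tate}) to identify $\textrm{End}_k^0(X) \otimes_\Q \Q_l$ with $\textrm{End}_\Gamma(V_l X)$; since $\Gamma$ is topologically generated by the Frobenius $\pi_X$, this is the same as the commutant $C := \textrm{End}_{\Q_l[\pi_X]}(V_l X)$ of $\Q_l[\pi_X]$ inside $\textrm{End}_{\Q_l}(V_l X) \cong M_{2g}(\Q_l)$. The semisimplicity clause of Tate's theorem further ensures that $\pi_X$ acts semisimply on $V_l X$, so it is diagonalizable over $\overline{\Q}_l$ with the roots of $f_X$ as its eigenvalues. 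This single setup drives every part of the corollary.

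For parts (a)--(c), I would first apply the double commutant theorem: the commutative semisimple subalgebra $\Q_l[\pi_X]$ equals its own double commutant in $M_{2g}(\Q_l)$, so the center of $C$ is $\Q_l[\pi_X]$ itself, and descending to $\Q$ gives $Z(\textrm{End}_k^0(X)) = \Q[\pi_X]$. The condition for $X$ to be elementary, namely that $\textrm{End}_k^0(X)$ be simple, is then equivalent to $\Q[\pi_X]$ being a field, which by semisimplicity of $\pi_X$ amounts to $f_X$ being a power of an irreducible polynomial. In the elementary case, the relation $f_X = h^d$ with $de = 2g$ is obtained by diagonalizing $\pi_X$ over $\overline{\Q}_l$ via the decomposition $\Q_l[\pi_X] \cong \prod_{v \mid l} \Q(\pi_X)_v$ and computing the local characteristic polynomials. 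The dimension bounds in (c) follow from the general fact that the commutant of a single operator on an $n$-dimensional space has dimension between $n$ and $n^2$; the CM-type property follows in the elementary case from a maximal subfield of $\textrm{End}_k^0(X)$ of dimension $de = 2g$ over $\Q$, and in general by assembling such subfields across the isotypic components of $X$.

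For parts (d) and (e), the two extremes of the dimension bound are analyzed. In (d), the key input is the linear-algebra fact that $C$ has dimension exactly $2g$ iff $V_l X$ is a cyclic $\Q_l[\pi_X]$-module, which by semisimplicity of $\pi_X$ is equivalent to $h = f_X$ and hence to $f_X$ being separable; this yields (d-1)$\Leftrightarrow$(d-4). The chain (d-1)$\Leftrightarrow$(d-2)$\Leftrightarrow$(d-3) then follows since $\Q[\pi_X]$ is the center of $\textrm{End}_k^0(X)$ of dimension $\deg h$, so $\dim_\Q \textrm{End}_k^0(X) = 2g$ forces $h = f_X$ and $\Q[\pi_X] = \textrm{End}_k^0(X)$, which is then commutative, and conversely. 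In (e), the condition $\dim_\Q \textrm{End}_k^0(X) = (2g)^2$ forces $C = M_{2g}(\Q_l)$, equivalently $\Q_l[\pi_X] = \Q_l$, whence $\pi_X \in \Q$ and $f_X = (t - \pi_X)^{2g}$; Honda--Tate theory then identifies $\pi_X$ as the Weil number of a supersingular elliptic curve $E/k$ with $\textrm{End}_k(E) = \textrm{End}_{\overline{k}}(E_{\overline{k}})$, and the isogeny $X \sim E^g$ delivers $\textrm{End}_k^0(X) \cong M_g(D_{p,\infty})$.

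The main obstacle is organizational rather than conceptual: weaving together Tate's isomorphism, the semisimplicity of $\pi_X$, the double commutant formalism, and the isotypic decomposition of $X$ into one coherent argument. The most delicate point is that the semisimplicity of $\pi_X$ is precisely what lets one equate ``$\Q[\pi_X]$ a field'' with ``$f_X$ a power of an irreducible'' in (a), and ``$V_l X$ cyclic'' with ``$f_X$ separable'' in (d-4); without this input these equivalences would break down.
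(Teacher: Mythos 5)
Your proposal is correct and follows essentially the same route as the paper's source: the paper gives no argument of its own for Corollary \ref{cor TateEnd0} beyond citing \cite[Theorem 2]{11}, and Tate's proof there is precisely your deduction from Theorem \ref{thm Tate}, namely the identification $\textrm{End}_k^0(X)\otimes_{\Q}\Q_l \cong \textrm{End}_{\Q_l[\pi_X]}(V_l X)$, the double commutant theorem, and the semisimplicity of the Frobenius action. The one point where your sketch is thinner than the actual argument is (b): diagonalizing $\pi_X$ over $\overline{\Q}_l$ does not by itself yield $f_X=h^d$; one needs the local form of the commutant identification, $\textrm{End}_k^0(X)\otimes_{\Q(\pi_X)} K_v \cong \textrm{End}_{K_v}(V_v)$ for each place $v\mid l$ of $K=\Q(\pi_X)$ (with $V_v$ the $v$-component of $V_l X$), which together with $[\textrm{End}_k^0(X):K]=d^2$ forces $\dim_{K_v}V_v=d$ and hence gives every local factor of $h$ the same multiplicity $d$.
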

\begin{proof}
  For a proof, see \cite[Theorem 2]{11}.
\end{proof}

For a precise description of the structure of the endomorphism algebra of an elementary abelian variety $X$, viewed as a simple algebra over its center $\Q[\pi_X]$, we record the following two results:
\begin{proposition}\label{local inv}
  Let $X$ be an elementary abelian variety over a finite field $k=\F_q.$ Let $K=\Q[\pi_X].$ If $\nu$ is a place of $K$, then the local invariant of $\textrm{End}_k^0(X)$ in the Brauer group $\textrm{Br}(K_{\nu})$ is given by
  \begin{equation*}
    \textrm{inv}_{\nu}(\textrm{End}_k^0(X))=\begin{cases} 0 & \mbox{if $\nu$ is a finite place not above $p$}; \\ \frac{\textrm{ord}_{\nu}(\pi_X)}{\textrm{ord}_{\nu}(q)} \cdot [K_{\nu}:\Q_p] & \mbox{if $\nu$ is a place above $p$}; \\ \frac{1}{2} & \mbox{if $\nu$ is a real place of $K$}; \\ 0 & \mbox{if $\nu$ is a complex place of $K$}. \end{cases}
  \end{equation*}
\end{proposition}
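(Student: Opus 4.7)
The plan is to compute the local algebra $D_w := D \otimes_K K_w$ place by place, organizing the argument into three regimes: finite places of residue characteristic $\ne p$ (using Tate's theorem), archimedean places (using the structure of $K$ together with the supersingular description in Corollary \ref{cor TateEnd0}(e)), and places above $p$ (using Dieudonn\'e theory). The global relation $\sum_w \textrm{inv}_w(D) = 0$ in $\Q/\Z$ supplies a consistency check across the three cases.

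For a finite place $\nu$ of residue characteristic $\ell \ne p$, Theorem \ref{thm Tate}(b) identifies $D \otimes_\Q \Q_\ell$ with $\textrm{End}_\Gamma(V_\ell X)$. The center action $K \otimes_\Q \Q_\ell = \prod_{w \mid \ell} K_w$ decomposes $V_\ell X = \bigoplus_{w \mid \ell} V_w$ into $K_w[\Gamma]$-modules, so that $D_w = \textrm{End}_{K_w[\Gamma]}(V_w)$. By semisimplicity (Theorem \ref{thm Tate}(a)), $V_w$ is a direct sum of simple $K_w[\Gamma]$-modules, so $D_w$ is a product of matrix algebras over division rings; being central simple over $K_w$, it must in fact be $M_n(K_w)$, and $\textrm{inv}_\nu(D) = 0$.

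The archimedean case is handled by direct inspection. A complex place gives invariant $0$ since $\textrm{Br}(\C) = 0$. If $\nu$ is a real place of $K$, then the Frobenius $\pi_X$ is real there, and the Weil identity $\pi_X \bar\pi_X = q$ collapses to $\pi_X^2 = q$, forcing $\pi_X = \pm\sqrt{q}$ and $K \in \{\Q, \Q(\sqrt{q})\}$. This lands $X$ in the equivalent conditions of Corollary \ref{cor TateEnd0}(e): $X$ is isogenous to a power of a supersingular elliptic curve, and $D$ is Morita-equivalent over its center to a quaternion algebra ramified at the real places of $K$; hence $\textrm{inv}_\nu(D) = 1/2$.

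The hardest case is $\nu \mid p$. Using the rational Dieudonn\'e module $M$ of $X$, a module over $B := \textrm{Frac}(W(\overline{\F}_q))$ equipped with a Frobenius-semilinear operator $F$, the commuting actions of $D \otimes_\Q \Q_p$ and $K \otimes_\Q \Q_p$ decompose $M = \bigoplus_{w \mid p} M_w$ with $D_w$ acting on $M_w$. The Dieudonn\'e--Manin classification identifies the slope of $M_\nu$ with $\textrm{ord}_\nu(\pi_X)/\textrm{ord}_\nu(q)$, and translating this slope into the invariant of the central simple $K_\nu$-algebra $D_\nu$, weighted by the local degree $[K_\nu : \Q_p]$, produces the claimed formula. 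This $p$-adic step is the main obstacle, as it invokes machinery beyond the preliminaries; I would cite Tate's original paper \cite{11} for the detailed verification.
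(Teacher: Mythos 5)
The paper offers no argument of its own for this proposition (its proof is the citation to \cite{2}, Cor.\ 16.30), and your outline follows the same standard route as that reference; so the question is whether your sketch stands on its own, and two of your three cases have real problems. At a finite place $\nu\nmid p$, the step ``$D_w$ is a product of matrix algebras over division rings; being central simple over $K_w$, it must in fact be $M_n(K_w)$'' is a non sequitur: a central simple $K_w$-algebra is a matrix algebra over a division algebra \emph{with center} $K_w$, and nothing you have said forces that division algebra to be trivial. The correct reason for splitness is that $\Gamma$ is topologically generated by Frobenius, which acts on $V_w$ through the central element $\pi_X\in K_w$; hence $\mathrm{End}_{K_w[\Gamma]}(V_w)=\mathrm{End}_{K_w}(V_w)\cong M_n(K_w)$. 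Semisimplicity is not what does the work here; Theorem~\ref{thm Tate}(b) plus this centrality is.

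The more serious gap is the real place. From $\pi_X=\pm\sqrt q$ you pass to ``the equivalent conditions of Corollary~\ref{cor TateEnd0}(e)'', but condition (e-2) reads $\Q[\pi_X]=\Q$, which holds only when $q$ is a square. When $q$ is not a square, $K=\Q(\sqrt p)$ is a real quadratic field (the simple representatives are supersingular abelian surfaces), Corollary~\ref{cor TateEnd0}(e) does not apply, and your assertion that $D$ is Morita equivalent to a quaternion algebra ramified at the real places of $K$ is precisely the statement to be proved, not an available input. The standard repair is to base change to $k'=\F_{q^2}$, where the Frobenius becomes $q\in\Q$, so that $\mathrm{End}^0_{k'}(X_{k'})\cong M_m(D_{p,\infty})$ by (e); by Galois descent $D$ is the centralizer of $\pi_X$ in this algebra (the $q$-Frobenius acts by conjugation with $\pi_X$), hence is Brauer equivalent to $M_m(D_{p,\infty})\otimes_{\Q}K$ over $K$, which gives $\mathrm{inv}_\nu(D)=[K_\nu:\R]\cdot\tfrac12=\tfrac12$ at each real $\nu$. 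Finally, deferring the case $\nu\mid p$ to a citation is in the spirit of what the paper itself does, but Tate's Inventiones paper \cite{11} does not contain that Dieudonn\'e-module computation; it is carried out in \cite{2}, \S 16 (the paper's own source) and in Tate's Bourbaki expos\'e and Waterhouse--Milne, so the citation should point there.
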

\begin{proof}
  For a proof, see \cite[Corollary 16.30]{2}.
\end{proof}

\begin{proposition}\label{index end alg}
  Let $X$ be a simple abelian variety over a finite field $k.$ Let $d$ be the degree of the division algebra $D:=\textrm{End}_k^0(X)$ over its center $\Q(\pi_X)$ (so that $d=[D:\Q(\pi_X)]^{\frac{1}{2}}$ and $f_X = (f_{\Q}^{\pi_X})^d$). Then $d$ is the least common denominator of the local invariants $\textrm{inv}_{\nu}(D).$
\end{proposition}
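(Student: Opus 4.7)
The plan is to reduce the statement to a classical theorem from the theory of central simple algebras over global fields. Since $X$ is a simple (hence elementary) abelian variety over a finite field, by Corollary~\ref{cor TateEnd0}, the center $K := \Q(\pi_X)$ of $D$ is a number field, and $D$ is a central simple division algebra over $K$. Thus the proposition is purely an assertion about central simple algebras over number fields.

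First I would recall the relevant local theory. For each place $\nu$ of $K$, the completion $D_\nu := D \otimes_K K_\nu$ is a central simple $K_\nu$-algebra, so $D_\nu \cong M_{r_\nu}(\Delta_\nu)$ for a unique central division algebra $\Delta_\nu$ over $K_\nu$. Setting $d_\nu := [\Delta_\nu : K_\nu]^{1/2}$, the local Brauer group $\textrm{Br}(K_\nu)$ embeds into $\Q/\Z$ via the invariant map (with image $\Q/\Z$ for non-archimedean $\nu$, and $\frac{1}{2}\Z/\Z$ or $0$ in the archimedean case), and under this identification the order of $\textrm{inv}_\nu(D)$ in $\Q/\Z$ — that is, the denominator of $\textrm{inv}_\nu(D)$ in lowest terms — equals $d_\nu$. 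This is a standard fact about Brauer groups of local fields.

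The main input is then the global theorem of Brauer--Hasse--Noether (a consequence of class field theory): for a central simple algebra $D$ over a number field $K$, the Schur index of $D$ equals the least common multiple of the local indices $d_\nu$. Combining this with the previous step, the Schur index of $D$ is exactly the least common denominator of the family $\{\textrm{inv}_\nu(D)\}_\nu$. Since $D$ is a division algebra, its Schur index coincides with $[D:K]^{1/2} = d$, and the proposition follows.

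The only point that might be regarded as nontrivial is the global equality ``Schur index $=$ lcm of local indices'', which is really the coincidence of period and index over number fields; but this is classical (due to Brauer--Hasse--Noether) and can simply be cited, for instance from the reference \cite{2} that is already used throughout this section. No obstacle specific to abelian varieties arises — all the arithmetic-geometric content is encoded in the identification of $K$ as $\Q(\pi_X)$ and of $D$ as the endomorphism algebra, which have already been established.
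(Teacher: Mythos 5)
Your argument is correct: the identification of the denominator of $\textrm{inv}_\nu(D)$ with the local index, combined with the Brauer--Hasse--Noether theorem that the global Schur index is the least common multiple of the local indices, and the fact that for a division algebra the index equals $[D:K]^{1/2}=d$, is exactly the standard proof. The paper itself offers no independent argument but simply cites \cite[Corollary 16.32]{2}, which establishes the statement by the same class-field-theoretic route, so your proposal matches the intended proof in substance.
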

\begin{proof}
  For a proof, see \cite[Corollary 16.32]{2}.
\end{proof}

\subsection{Abelian varieties up to isogeny and Weil numbers: Honda-Tate theory}\label{thm Honda}
In this section, we recall an important theorem of Honda and Tate. To achieve our goal, we first give the following
\begin{definition}\label{qWeil Def}
  (a) A \emph{$q$-Weil number} is an algebraic integer $\pi$ such that $| \iota(\pi) | = \sqrt{q}$ for all embeddings $\iota : \Q[\pi] \hookrightarrow \C.$ \\
  (b) Two $q$-Weil numbers $\pi$ and $\pi^{\prime}$ are said to be \emph{conjugate} if they have the same minimal polynomial over $\Q,$ or equivalently, there is an isomorphism $\Q[\pi] \rightarrow \Q[\pi^{\prime}]$ sending $\pi$ to $\pi^{\prime}.$
\end{definition}

Regarding $q$-Weil numbers, the following facts are well-known:
\begin{remark}\label{qWeil rem}
Let $X$ and $Y$ be abelian varieties over a finite field $k=\F_q.$ Then we have \\
(1) The Frobenius endomorphism $\pi_X$ is a $q$-Weil number. \\
(2) $X$ and $Y$ are $k$-isogenous if and only if $\pi_X$ and $\pi_Y$ are conjugate.
\end{remark}

Now, we introduce our main result of this section:

\begin{theorem}\label{thm HondaTata}
 For every $q$-Weil number $\pi$, there exists a simple abelian variety $X$ over $\F_q$ such that $\pi_X$ is conjugate to $\pi$. Moreover, we have a bijection between the set of isogeny classes of simple abelian varieties over $\F_q$ and the set of conjugacy classes of $q$-Weil numbers given by $X \mapsto \pi_X$.
\end{theorem}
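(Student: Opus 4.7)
The plan is to handle the two assertions separately. Well-definedness of the map $X \mapsto \pi_X$ on isogeny classes is immediate: a $k$-isogeny $\varphi : X \to Y$ satisfies $\varphi \circ \pi_X = \pi_Y \circ \varphi$, so after tensoring with $\Q_l$ the map $V_l\varphi$ is a $\Gamma$-equivariant isomorphism, which forces $\pi_X$ and $\pi_Y$ to have the same characteristic polynomial, hence the same minimal polynomial, placing them in the same conjugacy class of Definition \ref{qWeil Def}(b). For injectivity, suppose $\pi_X$ and $\pi_Y$ are conjugate and $X,Y$ are simple. For any prime $l \ne \textrm{char}(k)$, the $\Gamma$-action on $V_l X$ and $V_l Y$ factors through the Frobenius and has the same characteristic polynomial; the semisimplicity statement of Theorem \ref{thm Tate}(a) therefore yields a $\Gamma$-module isomorphism $V_l X \cong V_l Y$. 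Theorem \ref{thm Tate}(b) then produces a nonzero element of $\textrm{Hom}_k(X,Y) \otimes \Q_l$, and simplicity of both $X$ and $Y$ promotes any nonzero homomorphism into a $k$-isogeny.

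The substantive content is surjectivity, which is Honda's existence theorem. Given a $q$-Weil number $\pi$, the first move is to dispose of the cases in which $K := \Q(\pi)$ is $\Q$ or a real quadratic field; these force $\pi = \pm\sqrt{q}$ (requiring $q$ a square) or $\pi$ with $\pi^2=q$, and supersingular elliptic curves over $\F_q$ with all endomorphisms defined over $k$ (as in Corollary \ref{cor TateEnd0}(e)) exhaust the required isogeny classes. In the remaining case, $K$ is a CM field with complex conjugation sending $\pi \mapsto q/\pi$. Fix a CM type $\Phi$ on $K$ (after passing to a Galois closure if needed), form the analytic abelian variety $B = \C^{\Phi}/\Phi(\mathcal{O}_K)$ with $K \hookrightarrow \textrm{End}^0(B)$, descend it via the theory of complex multiplication to an abelian variety over a number field $F$, and reduce modulo a prime $\mathfrak{P} \mid p$ of good reduction (possibly after enlarging $F$). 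The Shimura–Taniyama formula then computes the local invariants of the Frobenius of the reduction $\bar B$ over the residue field $\F_{q^n}$ and identifies it, up to conjugacy, with $\pi^n$.

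From $\bar B$ one extracts a simple $\F_{q^n}$-factor $X_0$ with $\pi_{X_0}$ conjugate to $\pi^n$, and the final step is to descend $X_0$ to a simple abelian variety $X$ over $\F_q$ whose Frobenius is conjugate to $\pi$: this is arranged by noting that $\Q[\pi] \subset \Q[\pi^n]$ is the subfield cut out by the $q$-Weil condition, and then by tracking how $\F_q$-isogeny classes split over $\F_{q^n}$. The principal obstacle is precisely this descent, because the CM reduction naturally outputs a variety with Frobenius $\pi^n$ rather than $\pi$ itself; the remedy hinges on a careful bookkeeping of characteristic polynomials, together with one more application of Theorem \ref{thm Tate}(b) to convert $\Gamma$-equivariant data over $\F_{q^n}$ into $k$-isogenies over $\F_q$. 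Once this descent is in hand, conjugacy of $\pi_X$ and $\pi$ is read off from the matching minimal polynomials, completing the bijection.
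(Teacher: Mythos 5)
The paper does not actually prove Theorem \ref{thm HondaTata}: it simply cites Honda \cite{4} and \cite[\S 16.5]{2}, and your outline follows exactly that classical route (Tate's theorem for injectivity, CM lifting plus Shimura--Taniyama for existence), so there is no divergence of strategy to discuss --- only whether your sketch would work as a proof. As written it has two genuine gaps in the existence direction. First, you fix an \emph{arbitrary} CM type $\Phi$ on $K=\Q(\pi)$ and assert that the reduction has Frobenius (a power of) $\pi$. That is false for a general $\Phi$: by the Shimura--Taniyama formula the slopes of the reduction are governed by how many embeddings in $\Phi$ induce each place $v\mid p$ of $K$, so the heart of Honda's argument is precisely the choice of $\Phi$ adapted to the valuations $\textrm{ord}_v(\pi)/\textrm{ord}_v(q)$ (and even then one only obtains $\pi^N$, or $\pi$ times a root of unity, after enlarging the base field). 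Skipping that choice skips the core of the theorem. Second, the descent from $\F_{q^N}$ to $\F_q$, which you rightly flag as the principal obstacle, is resolved incorrectly: the inclusion goes the other way ($\Q[\pi^N]\subseteq\Q[\pi]$, not $\Q[\pi]\subseteq\Q[\pi^N]$), and Theorem \ref{thm Tate}(b) over $\F_q$ cannot ``convert'' $\textrm{Gal}(\overline{k}/\F_{q^N})$-equivariant data into $\F_q$-isogenies, because at that stage you have no abelian variety over $\F_q$ to map from. The standard mechanism is Weil restriction of scalars: put $Y=\textrm{Res}_{\F_{q^N}/\F_q}(X_0)$, observe that the characteristic polynomial of $\pi_Y$ is $f_{X_0}(t^N)$, which has $\pi$ as a root, and take for $X$ a suitable simple $\F_q$-factor of $Y$.

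Two smaller points. In the totally real case, your claim that supersingular elliptic curves with all endomorphisms defined exhaust the relevant classes fails when $q$ is not a square: the Weil number $\pm\sqrt{q}$ then corresponds to a simple supersingular abelian \emph{surface} over $\F_q$ (an elliptic curve's characteristic polynomial has constant term $+q$, never $-q$), so this case also requires the Weil-restriction argument rather than an elliptic curve. And in the injectivity step, conjugacy of $\pi_X$ and $\pi_Y$ gives equality of \emph{minimal} polynomials, not of characteristic polynomials, so the asserted isomorphism $V_l X\cong V_l Y$ is unjustified as stated; the fix is routine (both $f_X$ and $f_Y$ are powers of the same irreducible $h$, so the semisimple modules $V_l X$ and $V_l Y$ share a simple $\Gamma$-constituent, which already yields a nonzero element of $\Z_l\otimes\textrm{Hom}_k(X,Y)$ and hence an isogeny by simplicity), but it should be said.
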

The inverse of the map $X \mapsto \pi_X$ associates to a $q$-Weil number $\pi$ a simple abelian variety $X$ over $\F_q$ such that $f_X$ is a power of the minimal polynomial $f_{\Q}^{\pi}$ of $\pi$ over $\Q.$
\begin{proof}
For a proof, see \cite[Main Theorem]{4} or \cite[\S16.5]{2}.
\end{proof}

\subsection{Isomorphism classes contained in an isogeny class}\label{thm waterhouse}
In this section, we will give a useful result of Waterhouse~\cite{12}. Throughout this section, let $k=\F_q.$ \\

Let $X$ be an abelian variety over $k.$ Then $\textrm{End}_k(X)$ is a $\Z$-order in $\textrm{End}_k^0(X)$ containing $\pi_X$ and $q/\pi_X.$ If a ring is the endomorphism ring of an abelian variety, then we may consider a left ideal of the ring, and give the following
\begin{definition}\label{Roccur}
  Let $X$ be an abelian variety over $k$ with $R:=\textrm{End}_k(X),$ and let $I$ be a left ideal of $R$. \\
  (a) We define $H(X,I)$ to be the intersection of the kernels of all elements of $I$. This is a finite subgroup scheme of $X.$ \\
  (b) We define $X_I$ to be the quotient of $X$ by $H(X,I)$ i.e.\ $X_I=X/H(X,I).$ This is an abelian variety over $k$ that is $k$-isogenous to $X.$
\end{definition}

Now, to introduce our main result of this section, we need the following
\begin{lemma}\label{endchange}
  Let $X$ be an abelian variety over $k$ with $R:=\textrm{End}_k(X)$, and let $I$ be a left ideal of $R.$ Then $\textrm{End}_k(X_I)$ contains $O_r (I):=\{x \in \textrm{End}_k^0(X)~|~I x \subseteq I\}$, the right order of $I,$ and equals it if $I$ is a kernel ideal.
\end{lemma}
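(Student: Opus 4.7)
The plan is to identify $\textrm{End}_k^0(X)$ with $\textrm{End}_k^0(X_I)$ via conjugation by the quotient isogeny $\pi\colon X\to X_I$ (whose kernel is $H:=H(X,I)$), and to verify each direction by working inside this common rational endomorphism algebra.

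For the containment $O_r(I)\subseteq\textrm{End}_k(X_I)$, let $x\in O_r(I)$ and pick an integer $N\geq 1$ with $\gamma:=Nx\in R$. Then $I\gamma=N(Ix)\subseteq I$, so $\gamma\in O_r(I)\cap R$. For every $\phi\in I$, the endomorphism $\phi\gamma\in I$ annihilates $H$, which forces $\gamma|_H\colon H\to X$ to factor through $\bigcap_{\phi\in I}\ker(\phi)=H$. Thus $\gamma$ descends to a morphism $\tilde\gamma\colon X_I\to X_I$ with $\tilde\gamma\circ\pi=\pi\circ\gamma$, and under the isogeny identification $\tilde\gamma$ represents $N\cdot x$ inside $\textrm{End}_k^0(X_I)$. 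It then remains to show that $\tilde\gamma$ is divisible by $N$ in $\textrm{End}_k(X_I)$, equivalently that $\tilde\gamma$ annihilates $X_I[N]=\pi\bigl([N]^{-1}H\bigr)$ as a group scheme. For this it suffices to prove $\gamma\bigl([N]^{-1}H\bigr)\subseteq H$, which I would check by using, for each $\phi\in I$, the factorization
\[
\phi\gamma \;=\; N(\phi x) \;=\; (\phi x)\circ [N];
\]
here $\phi x\in I$ by the hypothesis $Ix\subseteq I$. Restricted to $[N]^{-1}H$, the morphism $[N]$ factors through $H$, and then $\phi x$ vanishes on $H$, so $(\phi x)\circ[N]$ kills $[N]^{-1}H$. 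Therefore $\gamma\bigl([N]^{-1}H\bigr)\subseteq\bigcap_{\phi\in I}\ker(\phi)=H$, so $\tilde\gamma=N\tilde\beta$ for some $\tilde\beta\in\textrm{End}_k(X_I)$, and $x$ corresponds to this $\tilde\beta$.

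For the equality when $I$ is a kernel ideal (i.e.\ $I$ equals the full annihilator of $H$ in $R$), take $\tilde\alpha\in\textrm{End}_k(X_I)$ and let $\alpha\in\textrm{End}_k^0(X)$ denote the corresponding rational endomorphism, characterized by $\pi\circ\alpha=\tilde\alpha\circ\pi$. For each $\phi\in I$, the fact that $\phi$ kills $H=\ker\pi$ yields a unique morphism $\tilde\phi\colon X_I\to X$ with $\phi=\tilde\phi\circ\pi$, and hence
\[
\phi\alpha \;=\; \tilde\phi\circ\pi\circ\alpha \;=\; \tilde\phi\circ\tilde\alpha\circ\pi \colon X\longrightarrow X
\]
is an honest morphism, so $\phi\alpha\in R$; moreover, it vanishes on $H$ because $\pi|_H=0$. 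The kernel-ideal hypothesis then forces $\phi\alpha\in I$, proving $I\alpha\subseteq I$ and $\alpha\in O_r(I)$.

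The main technical step is the divisibility check in the first part, where I have to verify that $\tilde\gamma$ kills $X_I[N]$ as a group scheme rather than just on geometric points. This is precisely where the full strength of $Ix\subseteq I$ (and not merely $I\gamma\subseteq I$) is essential: the resulting factorization $\phi\gamma=(\phi x)\circ[N]$ with $\phi x\in I$ is exactly what allows $[N]$ to absorb the preimage $[N]^{-1}H$ into $H$, after which the remainder of the composition vanishes automatically. Once this is in place, the rest of the argument is formal manipulation with the quotient map $\pi$ and the universal property defining $H$.
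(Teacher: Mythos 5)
The paper does not actually prove this lemma: it is quoted verbatim with a pointer to Waterhouse [12, Prop.\ 3.9] and to [2, Lemma 16.56], so there is no in-paper argument to compare against. Your proposal is a correct, self-contained proof, and it runs along the same standard lines as the cited sources: identify $\textrm{End}_k^0(X)$ with $\textrm{End}_k^0(X_I)$ via the quotient isogeny $\pi$, descend an integral multiple $\gamma=Nx$ of $x\in O_r(I)$ to $\tilde\gamma$ on $X_I$ (using that $I\gamma\subseteq I$ forces $\gamma(H)\subseteq H$), and then recover $x$ itself by showing $\tilde\gamma$ kills $X_I[N]$, which is exactly where the full hypothesis $Ix\subseteq I$ enters through the factorization $\phi\gamma=(\phi x)\circ[N]$; the converse inclusion uses the kernel-ideal hypothesis precisely where it must, namely to promote ``$\phi\alpha\in R$ kills $H$'' to ``$\phi\alpha\in I$''. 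Two small points deserve a word if you write this up: first, the identification $X_I[N]=\pi\bigl([N]^{-1}H\bigr)$ and the reduction of ``$\tilde\gamma$ vanishes on $X_I[N]$'' to ``$\tilde\gamma\circ\pi$ vanishes on $[N]^{-1}H$'' rest on $\pi^{-1}(X_I[N])=[N]^{-1}H$ together with faithful flatness of $\pi$, and divisibility then follows because $[N]$ is the quotient map of $X_I$ by $X_I[N]$; second, one should recall (as the paper's Definition 2.10 implicitly does) that $H(X,I)$ is finite so that $\pi$ is an isogeny and the $\textrm{End}^0$ identification makes sense. Neither point is a gap, only a place to add a sentence.
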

\begin{proof}
For a proof, see \cite[Lemma 16.56]{2} or \cite[Proposition 3.9]{12}.
\end{proof}

Using this lemma, we can prove the following result, which plays an important role:
\begin{proposition}\label{endposs}
  Let $X$ be an abelian variety over $k$. Then every maximal order in $D:=\textrm{End}_k^0(X)$ occurs as the endomorphism ring of an abelian variety in the isogeny class of $X.$
\end{proposition}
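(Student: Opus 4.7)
The plan is to construct, for a given maximal order $M$ of $D$, an explicit integral left ideal $I \subseteq R := \textrm{End}_k(X)$ whose right order $O_r(I)$ equals $M$, and then to invoke Lemma \ref{endchange}: it yields $\textrm{End}_k(X_I) \supseteq O_r(I) = M$, whereupon maximality of $M$ as a $\Z$-order in $D$ forces equality. Since $X_I$ is $k$-isogenous to $X$ by Definition \ref{Roccur}, this exhibits $M$ as the endomorphism ring of an abelian variety in the isogeny class of $X$.

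To build $I$, I would begin by setting $J := R \cdot M \subseteq D$, the $\Z$-submodule of $D$ spanned by all products $rm$ with $r \in R$ and $m \in M$. Since $R$ and $M$ are both $\Z$-lattices of full rank in $D$, so is $J$; by construction it is a finitely generated left $R$-module containing $M$, hence a fractional left $R$-ideal. Using that $M$ is an order (so $M \cdot M = M$), one computes $J \cdot M = R \cdot M \cdot M = R \cdot M = J$, whence $M \subseteq O_r(J)$. The right order $O_r(J)$ is itself a $\Z$-order of $D$, so the maximality of $M$ promotes this containment to the equality $O_r(J) = M$. Clearing denominators --- picking a positive integer $n$ with $nJ \subseteq R$, which is possible because $J$ and $R$ are commensurable lattices in $D$ --- I would set $I := nJ$. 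This $I$ is an integral left ideal of $R$ with $O_r(I) = O_r(J) = M$, and Lemma \ref{endchange} then finishes the argument.

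The step that requires the most care is the identification $O_r(J) = M$: one must know that $O_r(J)$ is a $\Z$-order of $D$, not merely a larger $\Z$-subalgebra, in order to invoke the maximality of $M$. This is a standard fact for fractional ideals in finite-dimensional semisimple $\Q$-algebras, and follows from the commensurability of $J$ with any chosen order. Once this is granted, no additional reduction (e.g.\ to the case $R \subseteq M$) is needed: the construction $J = R \cdot M$ works for an arbitrary maximal order $M$, and the whole argument reduces to a clean application of Lemma \ref{endchange} combined with the fact that any $\Z$-order of $D$ containing a maximal order must coincide with it.
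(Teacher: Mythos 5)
Your argument is correct, and it in fact supplies a proof where the paper gives none: the paper simply cites Waterhouse's Theorem 3.13 for this proposition. Your route is essentially the mechanism behind that cited proof, namely to manufacture a nonzero integral left ideal $I$ of $R=\textrm{End}_k(X)$ with $O_r(I)=M$ and then apply the containment part of Lemma \ref{endchange} together with two uses of maximality (once to identify $O_r$ of your fractional ideal with $M$, once to force $\textrm{End}_k(X_I)=M$). The details check out: $J=R\cdot M$ is a full $\Z$-lattice and left fractional $R$-ideal with $JM=J$, so $M\subseteq O_r(J)$; since $1\in J$ one even has $O_r(J)\subseteq J$, which gives the order property of $O_r(J)$ directly and hence $O_r(J)=M$; scaling by a central integer $n$ preserves the right order and lands $I=nJ$ inside $R$ (and $n\in I$, so $I$ is nonzero and $H(X,I)$ is finite, as Definition \ref{Roccur} requires); finally $\textrm{End}_k(X_I)$, viewed inside $\textrm{End}_k^0(X)$ as in Lemma \ref{endchange}, is an order containing the maximal order $M$, hence equals it, and $X_I$ lies in the isogeny class of $X$. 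The only difference from Waterhouse's own construction is the choice of ideal (he uses a conductor-type ideal such as $\{x\in D~|~xM\subseteq R\}$ rather than $n\cdot RM$), which is immaterial; what your version buys is a self-contained deduction from the lemmas already recorded in the paper.
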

\begin{proof}
For a proof, see \cite[Theorem 3.13]{12}.
\end{proof}

\section{Finite subgroups of division algebras}\label{findiv}
Recall that if $X$ is a simple abelian variety of dimension $g$ over a finite field $k$, then $\textrm{End}_k^0(X)$ is either a CM-field of degree $2g$ over $\Q$ or a central simple division algebra of degree $g$ over a totally imaginary quadratic field by Lemma \ref{poss end alg}. Hence, in this section, we give a classification of all possible finite groups that can be embedded in the multiplicative subgroup of a division algebra with certain properties that are related to our situation. Our main reference is a paper of Amitsur~\cite{1}. We start with the following

\begin{definition}\label{def 9}
  Let $m, r$ be two relatively prime positive integers, and we put $s:=\gcd(r-1, m)$ and $t:=\frac{m}{s}.$ Also, let $n$ be the smallest integer such that $r^n \equiv 1~(\textrm{mod}~m).$ We denote by $G_{m,r}$ the group generated by two elements $a,b$ satisfying the relations
  \begin{equation*}
    a^m=1,~b^n=a^t,~ bab^{-1}=a^r.
  \end{equation*}
  This type of groups includes the \emph{dicyclic group} of order $mn,$ in which case, we often write $\textrm{Dic}_{mn}$ for $G_{m,r}$. As a convention, if $r=1,$ then we put $n=s=1,$ and hence, $G_{m,1}$ is a cyclic group of order $m.$
\end{definition}

  Given $m,r,s,t,n,$ as above, we will consider the following two conditions in the sequel: \\
  (C1) $\gcd(n,t)=\gcd(s,t)=1.$ \\
  (C2) $n=2n^{\prime}, m=2^{\alpha} m^{\prime}, s=2 s^{\prime}$ where $\alpha \geq 2,$ and $n^{\prime}, m^{\prime}, s^{\prime}$ are all odd integers. Moreover, $\gcd(n,t)=\gcd(s,t)=2$ and $r \equiv -1~(\textrm{mod}~2^{\alpha}).$ \\

Now, let $p$ be a prime number that divides $m.$ We define: \\
(i) $\alpha_p$ is the largest integer such that $p^{\alpha_p}~|~m.$ \\
(ii) $n_p$ is the smallest integer satisfying $r^{n_p} \equiv 1~(\textrm{mod}~mp^{-\alpha_p}).$ \\
(iii) $\delta_p$ is the smallest integer satisfying $p^{\delta_p} \equiv 1~(\textrm{mod}~mp^{-\alpha_p}).$\\

Then we have the following theorem that provides us with a useful criterion for a group $G_{m,r}$ to be embedded in a division ring:
\begin{theorem}\label{thm 11}
  A group $G_{m,r}$ can be embedded in a division ring if and only if either (C1) or (C2) holds, and one of the following conditions holds: \\
  (1) $n=s=2$ and $r \equiv -1~(\textrm{mod}~m)$. \\
  (2) For every prime $q~|~n,$ there exists a prime $p~|~m$ such that $q \nmid n_p$ and that either \\
  (a) $p \ne 2$, and $\gcd(q,(p^{\delta_p}-1)/s)=1$, or \\
  (b) $p=q=2,$ (C2) holds, and $m/4 \equiv \delta_p \equiv 1~(\textrm{mod}~2).$
\end{theorem}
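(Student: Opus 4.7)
The plan is to split the equivalence into its two directions and in both cases reduce to a single construction: the cyclic algebra $A=(K/L,\sigma,\zeta_m^t)$, where $K=\Q(\zeta_m)$, $\sigma\in\textrm{Gal}(K/\Q)$ is the automorphism sending $\zeta_m\mapsto\zeta_m^r$, and $L=K^{\langle\sigma\rangle}$. By the defining property of $n$ as the order of $r$ modulo $m$, $\sigma$ has order $n$, so $[K:L]=n$ and $A$ is a central simple $L$-algebra of degree $n$. Inside $A^{\times}$, the elements $\zeta_m$ and the canonical generator $b$ (with $b^n=\zeta_m^t$ and $b\zeta_m b^{-1}=\zeta_m^r$) satisfy exactly the defining relations of $G_{m,r}$, producing an embedding $G_{m,r}\hookrightarrow A^{\times}$ whenever $A$ is a division algebra. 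So on the sufficiency side it suffices to prove that $A$ is a division algebra under the stated hypotheses; on the necessity side, any embedding $G_{m,r}\hookrightarrow D^{\times}$ produces $A$ as the subalgebra generated by $G_{m,r}$ inside $D$, forcing $A$ itself to be a division algebra.

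The crux in both directions is therefore a local-invariant computation. By the Albert--Brauer--Hasse--Noether theorem, the cyclic algebra $A$ of degree $n$ is a division algebra if and only if $[A]\in\textrm{Br}(L)$ has order exactly $n$, equivalently, if and only if for every prime $q\mid n$ there is a place $v$ of $L$ with $q\cdot\textrm{inv}_v(A_v)\neq 0$ in $\Q/\Z$. The conditions (C1) and (C2) are first needed so that $G_{m,r}$ is a well-defined abstract group (essentially a Sylow-level consistency check on $b^n=a^t$ together with $bab^{-1}=a^r$); the heart of the matter is then to translate ``there exists a place $v$ of $L$ above some rational prime $p$ with local invariant of order divisible by $q$'' into elementary conditions on $(m,r)$. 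For a rational prime $p\mid m$, the decomposition of $p$ in $L$ is controlled by the data $(\alpha_p,n_p,\delta_p)$: $\alpha_p$ measures wild ramification in $\Q(\zeta_m)/\Q$ at $p$, $\delta_p$ is the tame residual degree, and $n_p$ measures how much of $\sigma$'s cyclic action survives at a place over $p$. A direct computation in the style of Proposition~\ref{local inv} then shows that $q\nmid n_p$ together with $\gcd(q,(p^{\delta_p}-1)/s)=1$ is exactly the condition for $q$ to divide the denominator of $\textrm{inv}_v(A_v)$, giving (2a); the case $p=q=2$ requires the separate tracking in (2b) because of wild ramification, and (1) is the classical quaternion case $n=s=2$, $r\equiv-1\pmod m$.

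For the necessity direction, one feeds in Amitsur's preceding structural results (building on classical theorems of Herstein): any finite subgroup $G$ of a division ring has cyclic Sylow $p$-subgroup for every odd $p$ and either cyclic or generalized quaternion Sylow $2$-subgroup, pinning $G$ down to some $G_{m,r}$ satisfying (C1) or (C2). The cyclic algebra $A$ then sits inside the division subring generated by $G$, and the local-invariant argument of the previous paragraph, read in reverse, forces one of (1), (2a), (2b) for each $q\mid n$. The main obstacle throughout is the case $p=2$: wild ramification in $\Q(\zeta_{2^{\alpha}})/\Q$ breaks the clean tame formula used for odd $p$, and one must verify by hand that the strange-looking hypotheses $r\equiv-1\pmod{2^{\alpha}}$ and $m/4\equiv\delta_2\equiv 1\pmod 2$ in (C2)/(2b) are precisely what is needed for the local invariant at a place above $2$ to have even order. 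Once that case is dispatched, all other primes reduce to the tame cyclotomic arithmetic outlined above.
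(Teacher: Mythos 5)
The paper itself offers no argument for this theorem---the ``proof'' is a citation to Amitsur [1, Theorems 3, 4 and Lemma 10]---so your sketch can only be judged on its own terms. Its overall strategy is indeed Amitsur's: realize $G_{m,r}$ inside the cyclic algebra $A=(\Q(\zeta_m)/L,\sigma,\zeta_m^t)$, note that conversely a copy of $G_{m,r}$ in a division ring generates such an algebra, and decide when $A$ is a division algebra by local invariants. But as a proof it has genuine gaps. The decisive one is that the entire arithmetic content of the statement is asserted rather than proved: you never compute the splitting of a prime $p\mid m$ in $\Q(\zeta_m)/L$, the local norm residue symbol of $\zeta_m^t$, or the archimedean and wildly ramified ($p=2$) contributions, so the equivalence of ``$A$ is a division algebra'' with the explicit conditions---$q\nmid n_p$, $\gcd(q,(p^{\delta_p}-1)/s)=1$, the parities in (2b), and the dichotomy (C1)/(C2)---is exactly the part you dispatch with ``a direct computation shows'' and ``one must verify by hand''. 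That computation \emph{is} the theorem; without it the proposal is an outline of Amitsur's method, not a proof.

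Two specific assertions are also wrong or unjustified as written. First, (C1)/(C2) have nothing to do with $G_{m,r}$ being ``a well-defined abstract group'': the presentation defines a group of order $mn$ for every coprime pair $(m,r)$, and your own construction embeds it in $A^{\times}$ unconditionally. For example $G_{9,4}$ (where $s=t=n=3$, so (C1) and (C2) both fail) has order $27$, contains $\langle a^3, a^2b\rangle\cong\Z/3\Z\times\Z/3\Z$, and embeds in $A^\times$ with $A\cong M_3(\Q(\sqrt{-3}))$ split (one checks $\zeta_3=N_{K/L}(\zeta_9)$); so (C1)/(C2) are genuine embeddability restrictions whose necessity your argument never establishes---under your reduction this again requires showing $A$ is not a division algebra when both fail, i.e.\ the missing invariant computation. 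Second, your criterion from Albert--Brauer--Hasse--Noether is misstated: a degree-$n$ algebra over a number field is a division algebra iff its index, the least common multiple of the orders of the local invariants, equals $n$, i.e.\ iff for every prime power $q^a$ exactly dividing $n$ some local invariant has order divisible by $q^a$; demanding merely ``some place $v$ with $q\cdot\textrm{inv}_v(A)\neq 0$'' for each prime $q\mid n$ is strictly weaker (for $n=12$, local orders $2$ and $3$ pass your test but give index $6$). Finally, in the necessity direction the identification $\Q[G]\cong A$ inside $D$ needs the standard crossed-product argument (linear independence of $1,b,\dots,b^{n-1}$ over $\Q[a]$ because the conjugation automorphisms $\sigma^j$ are distinct), which you should at least invoke, and the appeal to ``Amitsur's preceding structural results'' to obtain (C1)/(C2) is circular in a proof meant to establish Amitsur's theorem itself.
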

\begin{proof}
  For a proof, see \cite[Theorem 3, Theorem 4, and Lemma 10]{1}.
\end{proof}

Now, let $G$ be a finite group. One of our main tools in this section is the following result:
\begin{theorem}\label{thm 12}
  $G$ can be embedded in a division ring if and only if $G$ is one of the following types: \\
  (1) Cyclic groups. \\
  (2) $G_{m,r}$ where the integers $m,r,$ etc, satisfy Theorem \ref{thm 11} (which is not cyclic). \\
  (3) $\mathfrak{T}^* \times G_{m,r}$ where $\mathfrak{T}^*$ is the binary tetrahedral group of order $24$, and $G_{m,r}$ is either cyclic of order $m$ with $\gcd(m,6)=1$, or of the type (2) with $\gcd(|G_{m,r}|, 6)=1.$ In both cases, for all primes $p~|~m,$ the smallest integer $\gamma_p$ satisfying $2^{\gamma_p} \equiv 1~(\textrm{mod}~p)$ is odd. \\
  (4) $\mathfrak{O}^*$, the binary octahedral group of order $48$. \\
  (5) $\mathfrak{I}^*,$ the binary icosahedral group of order $120.$
\end{theorem}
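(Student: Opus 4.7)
The plan is to combine the structural constraints that any finite subgroup of a division ring must satisfy with the explicit criterion of Theorem \ref{thm 11}. First, if $G \hookrightarrow D^{\times}$, then $\Q[G] \subseteq D$ is again a finite-dimensional division $\Q$-algebra, so one may replace $D$ by $\Q[G]$. Any abelian subgroup $A \leq G$ then generates a commutative sub-division-algebra, i.e.\ a field, so $A$ must be cyclic. A $p$-group all of whose abelian subgroups are cyclic is itself cyclic when $p$ is odd, and is either cyclic or generalized quaternion when $p = 2$. Hence every Sylow subgroup of $G$ at an odd prime is cyclic, and the Sylow $2$-subgroup is cyclic or of the form $Q_{2^{\alpha+1}}$ for some $\alpha \geq 2$.

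If every Sylow subgroup of $G$ is cyclic, a classical theorem of Zassenhaus--Burnside gives $G$ a metacyclic presentation $\langle a, b \mid a^m = 1,\ b^n = a^t,\ b a b^{-1} = a^r \rangle$, i.e.\ $G \cong G_{m,r}$; Theorem \ref{thm 11} then decides embeddability and yields cases (1) and (2). If the Sylow $2$-subgroup is generalized quaternion of order $\geq 16$, a refinement of the metacyclic analysis (which forces precisely the relations collected in condition (C2)) again produces $G \cong G_{m,r}$ of type (2). If $G$ fails to be metacyclic, a case-by-case argument (essentially the classification of Frobenius complements) forces $G$ to contain one of the binary polyhedral groups $\mathfrak{T}^*$, $\mathfrak{O}^*$, or $\mathfrak{I}^*$, each of which embeds in an appropriate quaternion division algebra over $\Q$.

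For a candidate of the form $\mathfrak{T}^* \times G_{m,r}$ one analyses the tensor product over $\Q$ of the enveloping division algebras. The quaternion algebra generated by $\mathfrak{T}^*$ is ramified precisely at $\{2, \infty\}$, so the tensor product is a division algebra iff the enveloping algebra of $G_{m,r}$ splits at those primes in a compatible way; this translates into $\gcd(|G_{m,r}|, 6) = 1$ together with the requirement that the multiplicative order $\gamma_p$ of $2$ modulo $p$ is odd for every $p \mid m$, which is the precise local obstruction at $2$. Direct computations rule out nontrivial analogous extensions of $\mathfrak{O}^*$ and $\mathfrak{I}^*$, producing cases (4) and (5). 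The main obstacle is the ``if'' direction of case (3): one must construct the ambient division algebra explicitly and verify non-splitness through Hasse invariants. Theorem \ref{thm 11} handles the $G_{m,r}$-factor, but the quaternion tensor step requires precise control of the Schur index at $2$, which is exactly what the $\gamma_p$-condition guarantees.
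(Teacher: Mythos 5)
Note first that the paper does not actually prove this statement: its ``proof'' is a citation to Amitsur's classification \cite[Theorem 7]{1}. What you have written is therefore an attempted sketch of Amitsur's own argument, and as a sketch its architecture is faithful to the original (reduction to $\Q[G]$ being a division algebra, cyclicity of abelian subgroups, Sylow subgroups cyclic or generalized quaternion, the metacyclic reduction to $G_{m,r}$, the exceptional binary polyhedral cases, and local/Schur-index computations). But as a proof it has genuine gaps precisely at the points where Amitsur's paper does the real work. The passage from ``$G$ not metacyclic'' to ``$G$ contains, and in fact essentially equals, one of $\mathfrak{T}^*\times G_{m,r}$, $\mathfrak{O}^*$, $\mathfrak{I}^*$'' is asserted via ``essentially the classification of Frobenius complements''; this is not a proof, and it silently includes the nontrivial necessity statements that no group of the form $\mathfrak{O}^*\times G_{m,r}$ or $\mathfrak{I}^*\times G_{m,r}$ (with nontrivial second factor), and no other extension built from $\mathfrak{T}^*$, can occur. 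Likewise the sufficiency in case (3) --- that $\gcd(|G_{m,r}|,6)=1$ together with the oddness of the order $\gamma_p$ of $2$ modulo each $p\mid m$ forces $\Q[\mathfrak{T}^*]\otimes_{\Q}\Q[G_{m,r}]$ (suitably interpreted) to remain a division algebra --- is exactly the Hasse-invariant/Schur-index computation you yourself flag as ``the main obstacle,'' and it is never carried out. So the decisive steps of both directions are stated, not proved.

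There is also a concrete error: you claim that $\mathfrak{O}^*$ and $\mathfrak{I}^*$ each embed ``in an appropriate quaternion division algebra over $\Q$.'' This is impossible. $\mathfrak{O}^*$ contains an element of order $8$ and $\mathfrak{I}^*$ one of order $10$, so any division algebra containing them contains the fields $\Q(\zeta_8)$, respectively $\Q(\zeta_{10})$, which have degree $4$ over $\Q$; a quaternion division algebra with center $\Q$ is $4$-dimensional over $\Q$ and its subfields have degree at most $2$ over $\Q$. The correct ambient algebras are quaternion division algebras over $\Q(\sqrt{2})$ and $\Q(\sqrt{5})$ respectively (only $\mathfrak{T}^*$ lives in the rational Hamilton quaternions, ramified at $\{2,\infty\}$ as you say). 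This slip does not affect the truth of the theorem, but it does mean the ``if'' direction for cases (4) and (5) is not established as written. In short: your outline is a reasonable roadmap of Amitsur's proof, but to stand on its own it would need the non-metacyclic case analysis, the invariant computations for case (3), and corrected embeddings for (4) and (5); the paper itself sidesteps all of this by citing \cite{1}.
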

\begin{proof}
  For a proof, see \cite[Theorem 7]{1}.
\end{proof}

Considering our situation, let $D$ be a division algebra of degree $g$ over its center $K$ (i.e.\ $[D:K]=g^2$) where $K$ is a totally imaginary quadratic number field. If $G$ is a finite subgroup of the multiplicative subgroup of $D,$ then in view of \cite[Corollary 7]{1}, we have that $G$ is either of type (1) or of type (2) in Theorem \ref{thm 12}. Now, if the group $G:=G_{m,r}$ is contained in $D^{\times},$ then $n~|~g$ (see \cite[\S7]{1}), and hence, we have that either $n=1$ or $n=g.$ Furthermore, if $n=g,$ then $g^2$ divides $|G|$ (see \cite[\S7]{1}). This observation leads us to the following

\begin{lemma}\label{cyc lem}
Let $D$ be as in the above and let $G:=G_{m,r}$ for some relatively prime integers $m,r.$ If $n=g$ (so that $G$ is not cyclic) and $|G|$ is even, then $G$ cannot be embedded in $D^{\times}.$
\end{lemma}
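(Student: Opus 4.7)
The plan is to argue by contradiction: assume $G = G_{m,r} \hookrightarrow D^{\times}$ with $n = g$ and $|G|$ even, and derive that no such $m$ can actually exist. The first step combines two divisibility facts. Since $|G| = mn = mg$ is even and $g$ is odd, $m$ must be even; and by the observations recorded immediately before the lemma (coming from \cite[\S7]{1}), the hypothesis $n = g$ forces $g^{2} \mid |G|$, hence $g \mid m$. Together these give $2g \mid m$.

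The second step, which I expect to be the main technical point, is to pin down $\varphi(m)$. Let $a \in D^{\times}$ be the generator of order $m$; then $\Q(a) \subseteq D$ is a commutative subring of a division ring, hence a field isomorphic to $\Q(\zeta_{m})$ (since $a$ is a primitive $m$-th root of unity and $\Phi_{m}$ is irreducible over $\Q$). The composite $K(a)$ is a subfield of $D$ containing the center, so $[K(a):K]$ divides $g$ and therefore equals $1$ or $g$. The case $[K(a):K] = 1$ puts $a$ in $K$, which via $b a b^{-1} = a^{r} = a$ forces $r \equiv 1 \pmod{m}$ and $n = 1$, contradicting $n = g \geq 3$. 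Hence $[K(a):K] = g$ and $[K(a):\Q] = 2g$. A comparison of dimensions in $K(a) = K \cdot \Q(\zeta_{m})$ then rules out the subcase $K \cap \Q(\zeta_{m}) = \Q$: that would give $\varphi(m) = g$, but $\varphi(m)$ is even for $m \geq 3$ while $g$ is odd. So $K \subseteq \Q(\zeta_{m})$ and $\varphi(m) = 2g$.

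Combining $2g \mid m$ with $\varphi(m) = 2g$ is very restrictive. Writing $m = 2^{a} g^{b} m_{1}$ with $a, b \geq 1$ and $\gcd(m_{1}, 2g) = 1$, the identity
\[
2^{a-1} g^{b-1} (g-1) \varphi(m_{1}) = 2g
\]
together with a short valuation check (using $v_{2}(g-1) \geq 1$, which pins down $a = 1$ and forces $\varphi(m_{1})$ odd, hence $m_{1} = 1$) leaves $(g,m) = (3,18)$ as the only possibility. Finally, for $m = 18$ and $n = g = 3$, direct computation shows that every $r$ of order $3$ in $(\Z/18)^{\times}$ (namely $r = 7$ and $r = 13$) satisfies $s = \gcd(r-1,18) = 6$ and $t = m/s = 3$, so $\gcd(n,t) = 3$ and condition (C1) fails; condition (C2) fails because $n = 3$ is odd. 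Theorem \ref{thm 11} then asserts that $G_{18,r}$ cannot be embedded in any division ring at all, in particular not in $D^{\times}$, giving the desired contradiction.
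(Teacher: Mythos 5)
Your argument is correct, and it reaches the paper's bottleneck case by a genuinely different middle step. The paper argues purely by enumeration: from the element of order $m$ it only records $\varphi(m)\mid 2g$, lists the possible even $m$ (splitting into $g=3$, $g\geq 5$ with $2g+1$ prime, $g\geq 5$ with $2g+1$ not prime), uses $g^{2}\mid |G|$ to discard all but $m\in\{6,18\}$ in the case $g=3$, and then kills those two by showing that (C1)/(C2) force values of $s$ incompatible with $\gcd(m,r)=1$ or with $n=3$. You instead exploit the structure of $D$ more heavily: the field $K(a)=K\cdot \Q(\zeta_m)$ together with the double centralizer theorem pins down the exact equality $\varphi(m)=2g$ (not merely divisibility), and combined with $2g\mid m$ a short valuation computation collapses everything to the single pair $(g,m)=(3,18)$, with no case split on whether $2g+1$ is prime and no $m=6$ subcase. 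Your endgame via Theorem \ref{thm 11} is the same in substance as the paper's, just run in the opposite direction: you compute $s=6$, $t=3$ for the actual elements $r=7,13$ of order $3$ modulo $18$ and see that (C1) and (C2) both fail, whereas the paper solves for the admissible $s$ and finds no valid $r$. The paper's route buys elementarity (no compositum or centralizer argument needed); yours buys uniformity and the sharper conclusion $\varphi(m)=2g$. One small point to write out: the $g$-adic half of your valuation check, namely that $g^{b-1}(g-1)=2g$ forces $b=2$ and $g-1=2$, hence $g=3$ and $m=18$; as stated you only carry out the $2$-adic half, though the missing step is immediate.
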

\begin{proof}
Note first that $m$ is even. Since $G$ contains an element of order $m,$ we have $\varphi(m)~|~2g,$ and hence, $\varphi(m)\in \{1,2,g,2g\}.$ Then it follows that
\begin{equation*}
m \in
\begin{cases} \{2,4,6,14,18\} & ~~\textrm{if}~g=3; \\
\{2,4,6,4g+2\} & ~~\textrm{if}~g \geq 5~\textrm{and}~2g+1~\textrm{is a prime number;} \\
\{2,4,6\} & ~~\textrm{if}~g \geq 5~\textrm{and}~2g+1~\textrm{is not a prime number.}
\end{cases}
\end{equation*}
We examine each case one by one. \\

(i) If $g=3,$ then we have
\begin{equation*}
|G|=mn \in \{6, 12, 18, 42, 54\}.
\end{equation*}
Since $9$ divides $18$ and $54,$ we have the following two subcases to consider: if $m=6$, then either $s=3, t=2$ or $s=6, t=1$ in view of Theorem \ref{thm 11}. For the first case, since $s=3,$ we have $r \equiv 4~(\textrm{mod}~6)$, which contradicts the assumption that $\textrm{gcd}(m,r)=1.$ For the second case, since $s=6,$ we have $r \equiv 1~(\textrm{mod}~6)$, which contradicts the fact that $n=3.$ Similarly, if $m=18$, then either $s=9, t=2$ or $s=18, t=1.$ For the first case, we have $r \equiv 10~(\textrm{mod}~18)$, which contradicts the assumption that $\textrm{gcd}(m,r)=1.$ For the second case, we have $r \equiv 1~(\textrm{mod}~18)$, which contradicts the fact that $n=3.$ Hence, this case cannot occur. \\

(ii) If $g \geq 5$ and $2g+1$ is a prime number, then we have
\begin{equation*}
|G|=mn \in \{2g, 4g, 6g, (4g+2)g\}.
\end{equation*}
Since $g^2$ does not divide any of these 4 elements, it follows that this case cannot occur. \\

(iii) If $g \geq 5$ and $2g+1$ is not a prime number, then we have
\begin{equation*}
|G|=mn \in \{2g, 4g, 6g\}.
\end{equation*}
Since $g^2$ does not divide any of these 3 elements, it follows that this case cannot occur. \\

This completes the proof.
\end{proof}

In view of \cite[Corollary 7]{1} and Lemma \ref{cyc lem}, we have the following
\begin{theorem}\label{thm 18}
  Let $D$ be as in the above. If a finite group $G$ of even order can be embedded in $D^{\times}$, then $G$ is one of the following groups (up to isomorphism):
  \begin{equation*}
  G \in
\begin{cases} \{\Z/2\Z,\Z/4\Z,\Z/6\Z,\Z/14\Z,\Z/18\Z\} & ~~\textrm{if}~g=3; \\
\{\Z/2\Z,\Z/4\Z,\Z/6\Z,\Z/(4g+2)\Z\} & ~~\textrm{if}~g \geq 5~\textrm{and}~2g+1~\textrm{is a prime number;} \\
\{\Z/2\Z,\Z/4\Z,\Z/6\Z\} & ~~\textrm{if}~g \geq 5~\textrm{and}~2g+1~\textrm{is not a prime number.}
\end{cases}
\end{equation*}
\end{theorem}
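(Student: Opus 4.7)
The plan is to combine the structural restriction from \cite[Corollary 7]{1} with Lemma \ref{cyc lem} to reduce to the cyclic case, then re-use the same arithmetic enumeration that already appears in the proof of Lemma \ref{cyc lem}.

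First, I would use \cite[Corollary 7]{1} and the discussion preceding Lemma \ref{cyc lem} to note that any finite $G \leq D^{\times}$ must be either cyclic (type (1) of Theorem \ref{thm 12}) or of the form $G_{m,r}$ (type (2) of Theorem \ref{thm 12}) with the associated parameter $n$ dividing $g$. Because $g$ is an odd prime, a non-cyclic $G_{m,r}$ must have $n=g$. Lemma \ref{cyc lem} exactly forbids this case when $|G|$ is even, so the hypothesis forces $G$ to be cyclic.

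Second, write $G = \langle \xi \rangle$ of even order $m$. Then $\xi$ is a primitive $m$-th root of unity in $D$, so $\Q(\xi) \cong \Q(\zeta_m)$ is a commutative subfield of $D$ of $\Q$-degree $\varphi(m)$. Since $D$ has degree $g$ over a quadratic center, its maximal commutative subfields have $\Q$-degree $2g$, hence $\varphi(m) \mid 2g$. As $g$ is an odd prime and $\varphi(m)$ is even for all $m \geq 3$, the value $\varphi(m) = g$ is impossible, leaving $\varphi(m) \in \{1, 2, 2g\}$.

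Third, I would enumerate even $m$ with $\varphi(m) \in \{1,2,2g\}$. The contribution from $\varphi(m) \in \{1,2\}$ is always $m \in \{2,4,6\}$. For $\varphi(m) = 2g$, a short prime-power case analysis on $m$ (as in the proof of Lemma \ref{cyc lem}) shows the only possibilities are $m \in \{2g+1, 4g+2\}$ when $2g+1$ is prime, and no solutions when $2g+1$ is composite; for $g=3$ one additionally has $m \in \{7,9,14,18\}$ from $\varphi(m)=6$. Keeping only the even $m$ yields the three cases listed in the statement.

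The main obstacle is the reduction to the cyclic case; once Lemma \ref{cyc lem} is in hand, the remainder is routine arithmetic on the Euler totient. Note also that no converse is needed: the theorem states a list of \emph{possibilities}, not that each $m$ in the list is actually realized.
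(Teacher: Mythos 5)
Your proposal is correct and follows essentially the same route as the paper, which deduces the theorem "in view of" Amitsur's Corollary 7 and Lemma \ref{cyc lem}: the non-cyclic groups $G_{m,r}$ with $n=g$ are excluded by that lemma, and the remaining cyclic case is settled by the same $\varphi(m)\mid 2g$ enumeration already carried out in the proof of Lemma \ref{cyc lem}. Your only addition is to make explicit the (correct) subfield-degree justification for $\varphi(m)\mid 2g$, which the paper leaves implicit.
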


Regarding our goal of this paper, the above theorem has a nice consequence:
\begin{corollary}\label{cor 19}
  Let $X$ be a simple abelian variety of dimension $g$ over a finite field $k.$ Let $G$ be a finite subgroup of the multiplicative subgroup of $\textrm{End}_k^0(X).$ Then $G$ is one of the following groups (up to isomorphism): \begin{equation*}
  G \in
\begin{cases} \{\Z/2\Z,\Z/4\Z,\Z/6\Z,\Z/14\Z,\Z/18\Z\} & ~~\textrm{if}~g=3; \\
\{\Z/2\Z,\Z/4\Z,\Z/6\Z,\Z/(4g+2)\Z\} & ~~\textrm{if}~g \geq 5~\textrm{and}~2g+1~\textrm{is a prime number;} \\
\{\Z/2\Z,\Z/4\Z,\Z/6\Z\} & ~~\textrm{if}~g \geq 5~\textrm{and}~2g+1~\textrm{is not a prime number.}
\end{cases}
\end{equation*}
\end{corollary}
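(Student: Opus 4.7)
My plan rests on the dichotomy supplied by Lemma \ref{poss end alg}: with $D := \textrm{End}_k^0(X)$, either (i) $D$ is a CM-field of degree $2g$ over $\Q$, or (ii) $D$ is a central simple division algebra of degree $g$ over a totally imaginary quadratic field. Case (ii) is exactly the setting of Theorem \ref{thm 18}, whose conclusion already reproduces the three claimed lists verbatim; so nothing more is needed there.

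In case (i), the algebra $D$ is commutative, so any finite subgroup $G \subseteq D^\times$ is cyclic, being a finite subgroup of the multiplicative group of a field. Writing $G \cong \Z/N\Z$, a generator is a primitive $N$-th root of unity in $D$, so $\Q(\zeta_N) \hookrightarrow D$ and therefore $\varphi(N) \mid 2g$. The rest reduces to enumerating which even $N$ are admissible, in parallel with the lists produced by Theorem \ref{thm 18}.

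The divisors of $2g$ are $1, 2, g, 2g$. The value $\varphi(N) = g$ is ruled out for all $N \geq 3$ because $\varphi(N)$ is then even while $g$ is odd. The cases $\varphi(N) \in \{1, 2\}$ immediately contribute $N \in \{2, 4, 6\}$. The only substantive step is $\varphi(N) = 2g$: writing $N = 2^a \prod p_i^{a_i}$ and comparing $2$-adic valuations, the oddness of $g$ forces $N$ to have at most one odd prime factor (else $4 \mid \varphi(N)$, contradicting $v_2(2g) = 1$) and bounds $a \leq 2$. A short case check on $N = 2^a p^b$ leaves $N = 2(2g+1)$ as the unique even solution when $2g+1$ is prime, with the single exceptional solution $N = 18$ (from $\varphi(9) = 6$) available only for $g = 3$. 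Reassembling these contributions yields exactly the three lists in the corollary.

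The only real obstacle is this final arithmetic enumeration for $\varphi(N) = 2g$; it is entirely elementary, but it is the place where the trichotomy between $g = 3$, $g \geq 5$ with $2g+1$ prime, and $g \geq 5$ with $2g+1$ composite genuinely appears.
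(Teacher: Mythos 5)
Your proposal is correct and follows essentially the same route as the paper: the dichotomy of Lemma \ref{poss end alg}, Theorem \ref{thm 18} for the division-algebra case, and cyclicity plus $\varphi(N) \mid 2g$ in the CM-field case. The only difference is that you spell out the elementary enumeration of solutions to $\varphi(N) \mid 2g$, which the paper leaves implicit (it is the same computation already carried out inside Lemma \ref{cyc lem}).
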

\begin{proof}
By Lemma \ref{poss end alg}, $\textrm{End}_k^0(X)$ is either a CM-field of degree $2g$ or a central simple division algebra of degree $g$ over a totally imaginary quadratic field. For the first case, we know that $G$ is cyclic. If $G=\langle f \rangle$ for some element $f$ of order $d,$ then $\varphi(d)~|~2g,$ and hence, $G$ is one of the groups in the above list. Now, if we are in the second case, then $G$ is also one of the groups in the above list by Theorem \ref{thm 18}. \\

This completes the proof.
\end{proof}

\section{Main Result}\label{main}
In this section, we will give a classification of finite groups that can be realized as the automorphism group of a simple polarized abelian variety of dimension $g$ over a finite field. \\

Let $G$ be a finite group. Our main result is the following

\begin{theorem}\label{main thm}
There exists a finite field $k$ and a simple abelian variety $X$ of dimension $g$ over $k$ with a polarization $\mathcal{L}$ such that $G=\textrm{Aut}_k(X,\mathcal{L})$ if and only if $G$ is one of the following groups (up to isomorphism):
  \begin{center}
  \begin{tabular}{|c|c|c|}
\hline
$$ & $G$ & g \\
\hline
$\sharp 1$ & $\Z/2\Z$ & $-$  \\
\hline
$\sharp 2$ & $\Z/4\Z$  & $-$ \\
\hline
$\sharp 3$ & $\Z/6\Z$ & $-$ \\
\hline
$\sharp 4$ & $\Z/14\Z$ &$ g=3 $ \\
\hline
$\sharp 5$ & $\Z/18\Z$ & $g=3$ \\
\hline
$\sharp 6$ & $\Z/(4g+2)\Z$  & $g \geq 5~\textrm{and}~2g+1~\textrm{is a prime number}$ \\
\hline
\end{tabular}
\vskip 4pt
\textnormal{Table 2}
\end{center}
\end{theorem}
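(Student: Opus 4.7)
The theorem splits into a necessity direction and a sufficiency direction. For necessity, given any polarized $(X,\mathcal{L})$ over a finite field $k$, the group $G:=\textrm{Aut}_k(X,\mathcal{L})$ is a finite subgroup of $\textrm{End}_k^0(X)^\times$. Since the inversion $[-1]$ preserves every polarization, $[-1]\in G$, so $|G|$ is even. Applying Corollary \ref{cor 19} immediately places $G$ in the list of Table 2.

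For sufficiency, the plan is to realize each $G$ in Table 2 as the torsion unit group $\mu(K)$ of a suitable CM field $K$ of degree $2g$. When $G=\Z/14\Z$ or $\Z/18\Z$ (the $g=3$ cases), take $K=\Q(\zeta_7)$ or $\Q(\zeta_9)$; when $G=\Z/(4g+2)\Z$ (for $g\geq 5$ with $2g+1$ prime), take $K=\Q(\zeta_{2g+1})$. In each of these cases $\varphi(n)=2g$ and $\mu(K)=\langle-\zeta_n\rangle\cong G$. For the three universally occurring groups $G=\Z/2\Z, \Z/4\Z, \Z/6\Z$, take $K=K_0\cdot F$ with $F$ equal to $\Q(\sqrt{-d})$ (generic), $\Q(i)$, or $\Q(\zeta_3)$ respectively, and $K_0$ a totally real field of complementary degree chosen generically so that the only roots of unity in $K$ are those in $F$; such $K_0$ exist in abundance.

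With $K$ fixed, I would use Theorem \ref{thm HondaTata} to produce a $q$-Weil number $\pi\in K$ generating $K/\Q$: for example, pick a rational prime $p$ splitting completely in $K$, take $\mathfrak{p}^{h_K}=(\pi)$ for some prime $\mathfrak{p}\mid p$, and rescale $\pi$ by a root of unity so that $\pi\bar\pi=p^{h_K}=q$. By Corollary \ref{cor TateEnd0}, this yields a simple abelian variety $X$ over $\F_q$ of dimension $g$ with $\textrm{End}_k^0(X)=K$, and by Proposition \ref{endposs} we may replace $X$ by a $k$-isogenous variety so that $\textrm{End}_k(X)=O_K$. Fix any polarization $\mathcal{L}$; since $K$ is of Albert Type IV with $d=1$, the Rosati involution on $K$ is complex conjugation, so $u\in O_K^\times$ belongs to $\textrm{Aut}_k(X,\mathcal{L})$ if and only if $u\bar u=1$. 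By Kronecker's theorem (algebraic integers all of whose conjugates have absolute value $1$ are roots of unity), these units are precisely the elements of $\mu(K)$, and therefore $\textrm{Aut}_k(X,\mathcal{L})=\mu(K)=G$.

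The principal obstacle is exhibiting Weil numbers with the required properties. For each prescribed $K$, one must produce a primitive $q$-Weil number $\pi\in K$ (i.e.\ with $\Q(\pi)=K$, so that $X$ is simple of dimension $g$ rather than elementary with $f_X=h^d$ for some $d>1$); this requires the class-group construction together with an adjustment by a root of unity to force $\pi\bar\pi$ to be an honest prime power, and one must exclude accidental descent of $\pi$ to a proper CM subfield of $K$. For the small groups $\Z/2\Z, \Z/4\Z, \Z/6\Z$, a secondary subtlety is arranging the totally real factor $K_0$ so that no unintended cyclotomic subfield creeps into $K$, which reduces to avoiding a short list of real quadratic subfields inside $K_0$. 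The numerical compatibility conditions from Table 1 are automatic here, since $K$ is a CM field so $e_0=g$ and $d=1$.
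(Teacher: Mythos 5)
Your necessity direction is the same as the paper's (it is exactly Corollary \ref{cor 19}), but your sufficiency strategy is genuinely different: you want to realize each $G$ as the full group of roots of unity $\mu(K)$ of a CM field $K$ of degree $2g$, force $\textrm{End}_k(X')=O_K$ via Proposition \ref{endposs}, and then identify $\textrm{Aut}_k(X',\mathcal{L})$ with the Rosati-unitary units, which are roots of unity by Kronecker's theorem. The paper instead realizes $\Z/2\Z,\Z/4\Z,\Z/6\Z,\Z/14\Z,\Z/18\Z$ inside degree-$3$ division algebras over imaginary quadratic fields obtained from explicit quadratic $q$-Weil numbers (roots of $t^2+5t+125$, $t^2+10t+125$, etc., with Haloui's result fixing the dimension), averages an ample bundle over $G$ to get $G\leq\textrm{Aut}_k(X',\mathcal{L}')$, and concludes by proving $G$ is a maximal finite subgroup of $D^{\times}$; in case $\sharp 6$ it simply takes the explicit Weil number $\pi=(2g+1)\zeta_{4g+2}$. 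Your route, if completed, is attractive (it would treat $\sharp1$--$\sharp3$ uniformly for every odd prime $g$ and avoids the embedding arguments into division algebras), but as written it has a genuine gap at its crucial step.

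The gap is the construction of the Weil number. If $p$ splits completely in $K$ with $[K:\Q]=2g>2$ and you take $(\pi)=\mathfrak{p}^{h_K}$ for a single prime $\mathfrak{p}\mid p$, then $(\pi\bar{\pi})=(\mathfrak{p}\bar{\mathfrak{p}})^{h_K}$, which omits the other $2g-2$ primes above $p$, so $\pi\bar{\pi}$ cannot equal $p^{h_K}$ and $\pi$ is not a $q$-Weil number; you must instead use an ideal $\mathfrak{a}=\mathfrak{p}_1\cdots\mathfrak{p}_g$ containing exactly one prime from each complex-conjugate pair, so that $\mathfrak{a}\bar{\mathfrak{a}}=(p)$. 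Even then a generator $\alpha$ of $\mathfrak{a}^{h}$ only satisfies $\alpha\bar{\alpha}=u\,p^{h}$ for some totally positive unit $u$ of the maximal totally real subfield, and your proposed fix --- rescaling by a root of unity --- does nothing, since $\zeta\bar{\zeta}=1$ leaves $\alpha\bar{\alpha}$ unchanged; one needs a real argument here (for instance pass to $\alpha^{2}/u$, with $q=p^{2h}$, or invoke a known existence theorem for Weil numbers generating a prescribed CM field). In addition, the primitivity $\Q(\pi)=K$ (needed so that $X$ is simple of dimension $g$ with $\textrm{End}_k^0(X)=K$) and, for $\Z/2\Z,\Z/4\Z,\Z/6\Z$, the absence of unintended roots of unity in $K=K_0F$, are flagged by you as obstacles but never actually resolved --- yet these are precisely the points where the paper's explicit choices do all the work (for the cyclotomic cases the explicit numbers $(2g+1)\zeta_{4g+2}$, $7\zeta_{14}$, $3\zeta_{18}$ used in the paper's proof and in Remark \ref{alt proof} make primitivity immediate). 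Finally, a smaller point: with the line-bundle definition of $\textrm{Aut}_k(X,\mathcal{L})$ used in the paper, ``fix any polarization'' does not by itself give $\mu(K)\subseteq\textrm{Aut}_k(X,\mathcal{L})$; you should either work with the polarization isogeny (where $u^{\vee}u=1$ is the correct criterion and your Kronecker argument applies) or keep the paper's averaging step $\mathcal{L}'=\bigotimes_{f\in G}f^{*}\mathcal{L}$.
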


\begin{proof}
Suppose first that there exists a finite field $k$ and a simple abelian variety $X$ of dimension $g$ over $k$ with a polarization $\mathcal{L}$ such that $G=\textrm{Aut}_k (X,\mathcal{L}).$ Then by Corollary \ref{cor 19}, $G$ is one of the 6 groups in the above table. Hence, it suffices to show the converse. We prove the converse by considering them one by one. \\

(1) Take $G=\Z/2\Z.$ Let $k=\F_{125}$ and let $\pi$ be a zero of the quadratic polynomial $t^2 +5t+125 \in \Z[t]$ so that $\pi$ is a $125$-Weil number. Then by Theorem \ref{thm HondaTata}, there exists a simple abelian variety $X$ of dimension $r$ over $\F_{125}$ such that $\pi_X$ is conjugate to $\pi,$ and, in fact, we can take $r=3$ by \cite[Proposition 1.2]{3}. Hence, it follows that we have $\Q(\pi_X)=\Q(\sqrt{-19})$ and $D:=\textrm{End}_k^0(X)$ is a central simple division algebra of degree $3$ over $\Q(\sqrt{-19})$ by Theorem \ref{poss end alg}. Now, it is well-known that such $D$ has a maximal $\Z$-order $\mathcal{O}$ containing the ring of integers $\Z \left[\frac{1+\sqrt{-19}}{2} \right]$ of $\Q(\sqrt{-19}).$ Since $\mathcal{O}$ is a maximal $\Z$-order in $D,$ there exists a simple abelian variety $X^{\prime}$ over $k$ such that $X^{\prime}$ is $k$-isogenous to $X$ and $\textrm{End}_k(X^{\prime})=\mathcal{O}$ by Proposition \ref{endposs}. Note also that $\Z/2\Z \cong \langle -1 \rangle \leq \Z \left[\frac{1+\sqrt{-19}}{2} \right]^{\times} \leq \mathcal{O}^{\times}=\textrm{Aut}_k(X^{\prime}).$  \\

  Now, let $\mathcal{L}$ be an ample line bundle on $X^{\prime}$, and put
  \begin{equation*}
    \mathcal{L}^{\prime}:=\bigotimes_{f \in \langle -1 \rangle} f^* \mathcal{L}.
  \end{equation*}
  Then $\mathcal{L}^{\prime}$ is an ample line bundle on $X^{\prime}$ that is preserved under the action of $\langle -1 \rangle$ so that $\langle -1 \rangle \leq \textrm{Aut}_k(X^{\prime},\mathcal{L}^{\prime}).$ Now, we show that $\langle -1 \rangle$ is a maximal finite subgroup of the multiplicative subgroup of $\textrm{End}_k^0(X^{\prime})=D.$ Indeed, suppose that there is a finite subgroup $H$ of $D^{\times}$, which properly contains $\langle -1 \rangle.$ Then $H$ is (isomorphic to) one of the 4 cyclic groups $\Z/4\Z, \Z/6\Z, \Z/14\Z,$ and $\Z/18\Z$ by Corollary \ref{cor 19}. If $H=\Z/4\Z,$ then $\Q(\zeta_4, \sqrt{-19})$ is a subfield of $D,$ which contains $\Q(\sqrt{-19})$, and this is absurd by the double centralizer theorem. Similarly, if $H=\Z/6\Z,$ then $\Q(\zeta_6, \sqrt{-19})$ is a subfield of $D,$ which contains $\Q(\sqrt{-19})$, and this is impossible. Now, if $H=\Z/14\Z,$ then $\Q(\zeta_{14}, \sqrt{-19})$ is a subfield of $D,$ which contains $\Q(\sqrt{-19})$. Then we have $\left[\Q(\zeta_{14}, \sqrt{-19}):\Q(\sqrt{-19})\right]=6 > 3,$ which is absurd. Similarly, if $H=\Z/18\Z,$ then $\Q(\zeta_{18}, \sqrt{-19})$ is a subfield of $D,$ which contains $\Q(\sqrt{-19})$ so that we have $\left[\Q(\zeta_{18}, \sqrt{-19}):\Q(\sqrt{-19})\right]=6 > 3,$ which is impossible. Therefore we can conclude that $\langle -1 \rangle$ is a maximal finite subgroup of $D^{\times}$. Then since $\textrm{Aut}_k(X^{\prime},\mathcal{L}^{\prime})$ is a finite subgroup of $D^{\times},$ it follows that
  \begin{equation*}
    G \cong \langle -1 \rangle =\textrm{Aut}_k(X^{\prime},\mathcal{L}^{\prime}).
  \end{equation*}

(2) Take $G=\Z/4\Z.$ Let $k=\F_{125}$ and let $\pi$ be a zero of the quadratic polynomial $t^2 +10t+125 \in \Z[t]$ so that $\pi$ is a $125$-Weil number. Then by Theorem \ref{thm HondaTata}, there exists a simple abelian variety $X$ of dimension $r$ over $\F_{125}$ such that $\pi_X$ is conjugate to $\pi,$ and, in fact, we can take $r=3$ by \cite[Proposition 1.2]{3}. Hence, it follows that we have $\Q(\pi_X)=\Q(\sqrt{-1})$ and $D:=\textrm{End}_k^0(X)$ is a central simple division algebra of degree $3$ over $\Q(\sqrt{-1})$ by Theorem \ref{poss end alg}. Now, it is well-known that such $D$ has a maximal $\Z$-order $\mathcal{O}$ containing the ring of integers $\Z[\sqrt{-1}]$ of $\Q(\sqrt{-1}).$ Since $\mathcal{O}$ is a maximal $\Z$-order in $D,$ there exists a simple abelian variety $X^{\prime}$ over $k$ such that $X^{\prime}$ is $k$-isogenous to $X$ and $\textrm{End}_k(X^{\prime})=\mathcal{O}$ by Proposition \ref{endposs}. Note also that $\Z/4\Z \cong \langle \sqrt{-1} \rangle \leq \Z[\sqrt{-1}]^{\times} \leq \mathcal{O}^{\times}=\textrm{Aut}_k(X^{\prime}).$  \\

  Now, let $\mathcal{L}$ be an ample line bundle on $X^{\prime}$, and put
  \begin{equation*}
    \mathcal{L}^{\prime}:=\bigotimes_{f \in \langle \sqrt{-1} \rangle} f^* \mathcal{L}.
  \end{equation*}
  Then $\mathcal{L}^{\prime}$ is an ample line bundle on $X^{\prime}$ that is preserved under the action of $\langle \sqrt{-1} \rangle$ so that $\langle \sqrt{-1} \rangle \leq \textrm{Aut}_k(X^{\prime},\mathcal{L}^{\prime}).$ Also, by Corollary \ref{cor 19}, we can see that $\langle \sqrt{-1} \rangle$ is a maximal finite subgroup of the multiplicative subgroup of $\textrm{End}_k^0(X^{\prime})=D$. Then since $\textrm{Aut}_k (X^{\prime},\mathcal{L}^{\prime})$ is a finite subgroup of $D^{\times},$ it follows that
  \begin{equation*}
    G \cong \langle \sqrt{-1} \rangle =\textrm{Aut}_k (X^{\prime}, \mathcal{L}^{\prime}).
  \end{equation*}

(3) Take $G=\Z/6\Z.$ Let $k=\F_{6859}$ and let $\pi$ be a zero of the quadratic polynomial $t^2 +19t+6859 \in \Z[t]$ so that $\pi$ is a $6859$-Weil number. Then by Theorem \ref{thm HondaTata}, there exists a simple abelian variety $X$ of dimension $r$ over $\F_{6859}$ such that $\pi_X$ is conjugate to $\pi,$ and, in fact, we can take $r=3$ by \cite[Proposition 1.2]{3}. Hence, it follows that we have $\Q(\pi_X)=\Q(\sqrt{-3})$ and $D:=\textrm{End}_k^0(X)$ is a central simple division algebra of degree $3$ over $K:=\Q(\sqrt{-3})$ by Theorem \ref{poss end alg}. Now, it is well-known that such $D$ has a maximal $\Z$-order $\mathcal{O}$ containing the ring of integers $\Z \left[\frac{1+\sqrt{-3}}{2} \right]$ of $K.$ Since $\mathcal{O}$ is a maximal $\Z$-order in $D,$ there exists a simple abelian variety $X^{\prime}$ over $k$ such that $X^{\prime}$ is $k$-isogenous to $X$ and $\textrm{End}_k(X^{\prime})=\mathcal{O}$ by Proposition \ref{endposs}. Note also that $\Z/6\Z \cong \left \langle \frac{1+\sqrt{-3}}{2} \right \rangle \leq \Z \left[\frac{1+\sqrt{-3}}{2} \right]^{\times} \leq \mathcal{O}^{\times}=\textrm{Aut}_k(X^{\prime}).$  \\

  Now, let $\mathcal{L}$ be an ample line bundle on $X^{\prime}$, and put
  \begin{equation*}
    \mathcal{L}^{\prime}:=\bigotimes_{f \in \left \langle \frac{1+\sqrt{-3}}{2} \right \rangle} f^* \mathcal{L}.
  \end{equation*}
  Then $\mathcal{L}^{\prime}$ is an ample line bundle on $X^{\prime}$ that is preserved under the action of $\left \langle \frac{1+\sqrt{-3}}{2} \right \rangle$ so that $\left \langle \frac{1+\sqrt{-3}}{2} \right \rangle \leq \textrm{Aut}_k(X^{\prime},\mathcal{L}^{\prime}).$ Now, we show that $\left \langle \frac{1+\sqrt{-3}}{2} \right \rangle$ is a maximal finite subgroup of the multiplicative subgroup of $\textrm{End}_k^0(X^{\prime})=D.$ Indeed, suppose that there is a finite subgroup $H$ of $D^{\times}$, which properly contains $\left \langle \frac{1+\sqrt{-3}}{2} \right \rangle.$ Then $H$ is (isomorphic to) $\Z/18\Z$ by Corollary \ref{cor 19}, and hence, there is an embedding of $L:=\Q(\zeta_{18})$ over $K$ into $D.$ Let $\mathfrak{p}$ be a prime of $K$ lying over $19$ and let $\mathfrak{P}$ be a prime of $L$ lying over $\mathfrak{p}.$ In particular, $\mathfrak{P}$ lies over $19.$ Since $19$ splits completely in both $K$ and $L,$ we have $[L_{\mathfrak{P}}:K_{\mathfrak{p}}]=1.$ On the other hand, it follows from Propositions \ref{local inv} and \ref{index end alg} that $D_{\mathfrak{p}}:= D \otimes_{K} K_{\mathfrak{p}}$ is a central simple division algebra of degree $3$ over $K_{\mathfrak{p}}$. Then since $3$ does not divide $1 = [L_{\mathfrak{P}}:K_{\mathfrak{p}}],$ this contradicts our assumption by the first theorem in \cite[p. 407]{7}. Therefore we can conclude that $\left \langle \frac{1+\sqrt{-3}}{2} \right \rangle$ is a maximal finite subgroup of $D^{\times}$. Then since $\textrm{Aut}_k(X^{\prime},\mathcal{L}^{\prime})$ is a finite subgroup of $D^{\times},$ it follows that
  \begin{equation*}
    G \cong \left \langle \frac{1+\sqrt{-3}}{2} \right \rangle =\textrm{Aut}_k(X^{\prime},\mathcal{L}^{\prime}).
  \end{equation*}

(4) Take $G=\Z/14\Z.$ Let $k=\F_{1331}$ and let $\pi$ be a zero of the quadratic polynomial $t^2 +44t+1331 \in \Z[t]$ so that $\pi$ is a $1331$-Weil number. Then by Theorem \ref{thm HondaTata}, there exists a simple abelian variety $X$ of dimension $r$ over $\F_{1331}$ such that $\pi_X$ is conjugate to $\pi,$ and, in fact, we can take $r=3$ by \cite[Proposition 1.2]{3}. Hence, it follows that we have $\Q(\pi_X)=\Q(\sqrt{-7})$ and $D:=\textrm{End}_k^0(X)$ is a central simple division algebra of degree $3$ over $\Q(\sqrt{-7})$ by Theorem \ref{poss end alg}. Also, by a similar argument as in the proof of (3), we can see that there is an embedding of $\Q(\zeta_{14})$ over $\Q(\sqrt{-7})$ into $D$ in view of the first theorem in \cite[p. 407]{7}. Now, it is well-known that such $D$ has a maximal $\Z$-order $\mathcal{O}$ containing the ring of integers $\Z[\zeta_{14}]$ of $\Q(\zeta_{14}).$ Since $\mathcal{O}$ is a maximal $\Z$-order in $D,$ there exists a simple abelian variety $X^{\prime}$ over $k$ such that $X^{\prime}$ is $k$-isogenous to $X$ and $\textrm{End}_k(X^{\prime})=\mathcal{O}$ by Proposition \ref{endposs}. Note also that $\Z/14\Z \cong \left \langle \zeta_{14} \right \rangle \leq \Z [\zeta_{14}]^{\times} =\Z^2 \times \left \langle \zeta_{14} \right \rangle \leq \mathcal{O}^{\times}=\textrm{Aut}_k(X^{\prime}).$  \\

  Now, let $\mathcal{L}$ be an ample line bundle on $X^{\prime}$, and put
  \begin{equation*}
    \mathcal{L}^{\prime}:=\bigotimes_{f \in \left \langle \zeta_{14} \right \rangle} f^* \mathcal{L}.
  \end{equation*}
  Then $\mathcal{L}^{\prime}$ is an ample line bundle on $X^{\prime}$ that is preserved under the action of $\left \langle \zeta_{14} \right \rangle$ so that $\left \langle \zeta_{14} \right \rangle \leq \textrm{Aut}_k(X^{\prime},\mathcal{L}^{\prime}).$ Also, by Corollary \ref{cor 19}, we can see that $\left \langle \zeta_{14} \right \rangle$ is a maximal finite subgroup of the multiplicative subgroup of $\textrm{End}_k^0(X^{\prime})=D$. Then since $\textrm{Aut}_k (X^{\prime},\mathcal{L}^{\prime})$ is a finite subgroup of $D^{\times},$ it follows that
  \begin{equation*}
    G \cong \left \langle \zeta_{14} \right \rangle =\textrm{Aut}_k (X^{\prime}, \mathcal{L}^{\prime}).
  \end{equation*}

(5) Take $G=\Z/18\Z.$ Let $k=\F_{343}$ and let $\pi$ be a zero of the quadratic polynomial $t^2 +7t+343 \in \Z[t]$ so that $\pi$ is a $343$-Weil number. Then a similar argument as in (4) can be used to show that there is a simple abelian variety $X$ of dimension $3$ over $k$ with a polarization $\mathcal{L}$ such that $G=\textrm{Aut}_k(X,\mathcal{L}).$ \\

(6) Take $G=\Z/(4g+2)\Z$ where $g \geq 5$ and $2g+1$ is a prime number. Let $k=\F_{(2g+1)^2}.$ Then $\pi:=(2g+1) \cdot \zeta_{4g+2}$ is a $(2g+1)^2$-Weil number, and hence, by Theorem \ref{thm HondaTata}, there exists a simple abelian variety $X$ of dimension $r$ over $k$ such that $\pi_X$ is conjugate to $\pi.$ Then we have $[\Q(\pi_X):\Q]=[\Q(\pi):\Q]=2g$ and $\Q(\pi_X)=\Q(\zeta_{4g+2}),$ and hence, $\Q(\pi_X)$ has no real embeddings so that all the local invariants of $\textrm{End}_k^0(X)$ are zero. Hence, it follows from Proposition \ref{index end alg} that $\textrm{End}_k^0(X)=\Q(\pi_X)$ and it is a CM-field of degree $2r$ over $\Q.$ Thus we have $r=g$, and $X$ is a simple abelian variety of dimension $g$ over $k$ with $\textrm{End}_k^0(X)=\Q(\zeta_{4g+2}).$ Now, since $\Z[\zeta_{4g+2}]$ is a maximal $\Z$-order in $\Q(\zeta_{4g+2}),$ there exists a simple abelian variety $X^{\prime}$ over $k$ such that $X^{\prime}$ is $k$-isogenous to $X$ and $\textrm{End}_k(X^{\prime})=\Z[\zeta_{4g+2}]$ by Proposition \ref{endposs}. Note also that $\Z/(4g+2)\Z \cong \langle \zeta_{4g+2} \rangle \leq \Z[\zeta_{4g+2}]^{\times}=\Z^{g-1} \times \langle \zeta_{4g+2} \rangle =\textrm{Aut}_k(X^{\prime}).$  \\

  Now, let $\mathcal{L}$ be an ample line bundle on $X^{\prime}$, and put
  \begin{equation*}
    \mathcal{L}^{\prime}:=\bigotimes_{f \in \langle \zeta_{4g+2} \rangle} f^* \mathcal{L}.
  \end{equation*}
  Then $\mathcal{L}^{\prime}$ is an ample line bundle on $X^{\prime}$ that is preserved under the action of $\langle \zeta_{4g+2} \rangle$ so that $\langle \zeta_{4g+2} \rangle \leq \textrm{Aut}_k(X^{\prime},\mathcal{L}^{\prime}).$ Note also that the maximal finite subgroup of $\Z^{g-1} \times \langle \zeta_{4g+2} \rangle $ is $(\langle -1 \rangle)^{g-1} \times \langle \zeta_{4g+2} \rangle \cong (\Z/2\Z)^{g-1} \times \Z/(4g+2)\Z$ by Goursat's Lemma. Since $\textrm{Aut}_k (X^{\prime},\mathcal{L}^{\prime})$ is a finite subgroup of the multiplicative subgroup of $\Q(\zeta_{4g+2}),$ we know that $\textrm{Aut}_k (X^{\prime}, \mathcal{L}^{\prime})$ is cyclic. Hence, we get
\begin{equation*}
G \cong \langle \zeta_{4g+2} \rangle = \textrm{Aut}_k (X^{\prime}, \mathcal{L}^{\prime}).
\end{equation*}

This completes the proof.
\end{proof}

\begin{remark}\label{alt proof}
We can give alternative proofs of (3), (4), and (5) as follows: \\

(3) Take $G=\Z/6\Z$. Let $k=\F_{7^g}$ where $g \geq 5$ and $2g+1$ is not a prime number. Also, let $\pi$ be a zero of the quadratic polynomial $t^2 +7^{\frac{g-1}{2}} t+7^g \in \Z[t]$ so that $\pi$ is a $7^g$-Weil number. Then by Theorem \ref{thm HondaTata}, there exists a simple abelian variety $X$ of dimension $r$ over $\F_{7^g}$ such that $\pi_X$ is conjugate to $\pi,$ and, in fact, we can take $r=g$ by \cite[11.5]{9}. Hence, it follows that we have $\Q(\pi_X)=\Q(\sqrt{-3})$ and $D:=\textrm{End}_k^0(X)$ is a central simple division algebra of degree $g$ over $\Q(\sqrt{-3})$ by Theorem \ref{poss end alg}. Now, it is well-known that such $D$ has a maximal $\Z$-order $\mathcal{O}$ containing the ring of integers $\Z \left[\frac{1+\sqrt{-3}}{2} \right]$ of $\Q(\sqrt{-3}).$ Since $\mathcal{O}$ is a maximal $\Z$-order in $D,$ there exists a simple abelian variety $X^{\prime}$ over $k$ such that $X^{\prime}$ is $k$-isogenous to $X$ and $\textrm{End}_k(X^{\prime})=\mathcal{O}$ by Proposition \ref{endposs}. Note also that $\Z/6\Z \cong \left \langle \frac{1+\sqrt{-3}}{2} \right \rangle \leq \Z \left[\frac{1+\sqrt{-3}}{2}\right]^{\times} \leq \mathcal{O}^{\times}=\textrm{Aut}_k(X^{\prime}).$  \\

  Now, let $\mathcal{L}$ be an ample line bundle on $X^{\prime}$, and put
  \begin{equation*}
    \mathcal{L}^{\prime}:=\bigotimes_{f \in \left \langle \frac{1+\sqrt{-3}}{2} \right \rangle} f^* \mathcal{L}.
  \end{equation*}
  Then $\mathcal{L}^{\prime}$ is an ample line bundle on $X^{\prime}$ that is preserved under the action of $\left \langle \frac{1+\sqrt{-3}}{2} \right \rangle$ so that $\left \langle \frac{1+\sqrt{-3}}{2} \right \rangle \leq \textrm{Aut}_k(X^{\prime},\mathcal{L}^{\prime}).$ Also, by Corollary \ref{cor 19}, we can see that $\left \langle \frac{1+\sqrt{-3}}{2} \right \rangle$ is a maximal finite subgroup of the multiplicative subgroup of $\textrm{End}_k^0(X^{\prime})=D$. Then since $\textrm{Aut}_k (X^{\prime},\mathcal{L}^{\prime})$ is a finite subgroup of $D^{\times},$ it follows that
  \begin{equation*}
    G \cong \left \langle \frac{1+\sqrt{-3}}{2} \right \rangle =\textrm{Aut}_k (X^{\prime}, \mathcal{L}^{\prime}).
  \end{equation*}

(4) Take $G=\Z/14\Z.$ Let $k=\F_{49}.$ Then $\pi:=7 \cdot \zeta_{14}$ is a $49$-Weil number, and hence, by Theorem \ref{thm HondaTata}, there exists a simple abelian variety $X$ of dimension $r$ over $k$ such that $\pi_X$ is conjugate to $\pi,$ and, in fact, we can take $r=3$ by \cite[\S 5]{3}. Hence, it follows that we have $\Q(\pi_X)=\Q(\zeta_{14})$ and $\textrm{End}_k^0(X)=\Q(\zeta_{14})$ is a CM-field of degree $6$ by Theorem \ref{poss end alg}. Now, since $\Z[\zeta_{14}]$ is a maximal $\Z$-order in $\Q(\zeta_{14}),$ there exists a simple abelian variety $X^{\prime}$ over $k$ such that $X^{\prime}$ is $k$-isogenous to $X$ and $\textrm{End}_k(X^{\prime})=\Z[\zeta_{14}]$ by Proposition \ref{endposs}. Note also that $\Z/14\Z \cong \langle \zeta_{14} \rangle \leq \Z[\zeta_{14}]^{\times}=\Z^2 \times \langle \zeta_{14} \rangle =\textrm{Aut}_k(X^{\prime}).$  \\

  Now, let $\mathcal{L}$ be an ample line bundle on $X^{\prime}$, and put
  \begin{equation*}
    \mathcal{L}^{\prime}:=\bigotimes_{f \in \langle \zeta_{14} \rangle} f^* \mathcal{L}.
  \end{equation*}
  Then $\mathcal{L}^{\prime}$ is an ample line bundle on $X^{\prime}$ that is preserved under the action of $\langle \zeta_{14} \rangle$ so that $\langle \zeta_{14} \rangle \leq \textrm{Aut}_k(X^{\prime},\mathcal{L}^{\prime}).$ Note also that the maximal finite subgroup of $\Z^2 \times \langle \zeta_{14} \rangle $ is $\langle -1 \rangle \times \langle -1 \rangle \times \langle \zeta_{14} \rangle \cong \Z/2\Z \times \Z/2\Z \times \Z/14\Z$ by Goursat's Lemma. Since $\textrm{Aut}_k (X^{\prime},\mathcal{L}^{\prime})$ is a finite subgroup of the multiplicative subgroup of $\Q(\zeta_{14}),$ we know that $\textrm{Aut}_k (X^{\prime}, \mathcal{L}^{\prime})$ is cyclic. Hence, we get
\begin{equation*}
G \cong \langle \zeta_{14} \rangle = \textrm{Aut}_k (X^{\prime}, \mathcal{L}^{\prime}).
\end{equation*}

(5) Take $G=\Z/18\Z.$ Let $k=\F_9$ and let $\pi = 3 \cdot \zeta_{18}$ be a $9$-Weil number. Then a similar argument as in Remark \ref{alt proof}-(4) can be used to show that there is a simple abelian variety $X$ of dimension $3$ over $k$ with a polarization $\mathcal{L}$ such that $G=\textrm{Aut}_k(X,\mathcal{L}).$
\end{remark}

We conclude this section by introducing another interesting result that is related to Theorem \ref{main thm}. Again, let $G$ be a finite group.
\begin{theorem}\label{application}
There exists an algebraically closed field $K$ of characteristic $p>0$ and a simple abelian variety $X$ of dimension $3$ over $K$ with a polarization $\mathcal{L}$ such that $G=\textrm{Aut}_K(X,\mathcal{L})$ if and only if $G$ is one of the following groups (up to isomorphism):
\begin{center}
  \begin{tabular}{|c|c|c|}
\hline
$$ & $G$ \\
\hline
$\sharp 1$ & $\Z/2\Z$   \\
\hline
$\sharp 2$ & $\Z/4\Z$  \\
\hline
$\sharp 3$ & $\Z/6\Z$ \\
\hline
$\sharp 4$ & $\Z/14\Z$  \\
\hline
$\sharp 5$ & $\Z/18\Z$ \\
\hline
\end{tabular}
\vskip 4pt
\textnormal{Table 3}
\end{center}
\end{theorem}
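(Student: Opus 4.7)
The plan is to combine the Oort classification in Remark \ref{oort rmk} with the finite-field constructions in Theorem \ref{main thm}, then verify that those constructions survive base change to an algebraic closure.

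For the "only if" direction, since $[-1]$ preserves any polarization we always have $[-1]\in\textrm{Aut}_K(X,\mathcal{L})$, so $|G|$ is even. With $g=3<5$, type (iii) of Remark \ref{oort rmk} is excluded, leaving five possibilities for $D:=\textrm{End}_K^0(X)$. In types (i) and (ii), $D$ is $\Q$ or a totally real cubic field whose only roots of unity are $\pm 1$, so $G=\Z/2\Z$. In type (iv), $D$ is an imaginary quadratic field and $G$ is cyclic of order $n$ with $\varphi(n)\mid 2$, so $n\in\{2,4,6\}$. In type (v), $D$ is a CM field of degree $6$, so $\varphi(n)\mid 6$ forces $n\in\{2,4,6,14,18\}$. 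Type (vi) is covered directly by Corollary \ref{cor 19} with $g=3$, yielding the same five even-order groups. The union is exactly Table~3.

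For the "if" direction, I would take the five polarized abelian varieties $(X',\mathcal{L}')$ of dimension $3$ over $k=\F_q$ built in cases (1)--(5) of the proof of Theorem \ref{main thm}, each realizing $G$ as $\textrm{Aut}_{\F_q}(X',\mathcal{L}')$, and pass to $K:=\overline{\F_q}$. The desired equality $\textrm{Aut}_K(X'_K,\mathcal{L}'_K)=G$ reduces to two checks: (a) $X'$ stays simple over $K$, and (b) $\textrm{End}_K^0(X'_K)$ coincides with $D:=\textrm{End}_{\F_q}^0(X')$. For (a), $\Q(\pi_{X'})$ is imaginary quadratic in each of the five constructions, so $\Q(\pi^n)\subsetneq\Q(\pi)$ would force the norm-one element $\pi/\overline{\pi}$ to be one of the finitely many roots of unity in that quadratic field; computing $\pi/\overline{\pi}$ directly from each minimal polynomial rules this out, so $\Q(\pi^n)=\Q(\pi)$ for every $n$ and $X'$ remains simple over every $\F_{q^n}$, hence over $K$. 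For (b), Proposition \ref{local inv} shows the local invariants $\mathrm{inv}_\nu$ depend only on the ratio $\mathrm{ord}_\nu(\pi)/\mathrm{ord}_\nu(q)$, which is preserved under $(\pi,q)\mapsto(\pi^n,q^n)$, so $\textrm{End}_{\F_{q^n}}^0(X')$ has the same center, degree, and local invariants as $D$; combined with $D\subseteq\textrm{End}_{\F_{q^n}}^0(X')$ and equality of $\Q$-dimensions this is an equality for every $n$, and taking the union gives $\textrm{End}_K^0(X'_K)=D$. Since $\mathcal{O}:=\textrm{End}_{\F_q}(X')$ is a maximal $\Z$-order in $D$, the containment $\mathcal{O}\subseteq\textrm{End}_K(X'_K)$ is also an equality, and the maximality argument for $G$ as a finite subgroup of $D^\times$ from the proof of Theorem \ref{main thm} applies verbatim to give $\textrm{Aut}_K(X'_K,\mathcal{L}'_K)=G$.

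The main obstacle I anticipate is step (a) in cases (4) and (5), where $\pi$ lies in $\Q(\sqrt{-7})$ and $\Q(\sqrt{-3})$ and one must compute $\pi/\overline{\pi}$ explicitly and verify by hand that it matches none of the roots of unity in those fields (the cyclotomic case $\Q(\sqrt{-3})=\Q(\zeta_6)$ requires slightly more care since there are six of them). Everything else is either a routine invariance computation or is already available from Sections~\ref{prelim} and \ref{findiv}.
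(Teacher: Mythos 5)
Your proposal is correct and follows the same overall route as the paper: the ``only if'' direction via Remark \ref{oort rmk} together with Theorem \ref{thm 18}/Corollary \ref{cor 19}, and the ``if'' direction by base-changing the five finite-field examples from the proof of Theorem \ref{main thm} to $\overline{\F_q}$ and reusing the maximal-finite-subgroup argument there. The one place you diverge is the base-change step: the paper simply invokes \cite[Proposition 3]{5} (Howe--Zhu) for absolute simplicity and asserts $\textrm{End}^0_{\overline{k}}(X_{\overline{k}})=\textrm{End}^0_k(X)$, whereas you prove both by hand, checking that $\pi/\overline{\pi}$ is not a root of unity in the relevant imaginary quadratic field (so $\Q(\pi^n)=\Q(\pi)$ for all $n$, which with Table 1 and the invariant count forces simplicity over every $\F_{q^n}$) and observing via Proposition \ref{local inv} that the local invariants, hence the degree from Proposition \ref{index end alg}, are unchanged under $(\pi,q)\mapsto(\pi^n,q^n)$. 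This costs you five small explicit computations (all of which do check out, including the $\Q(\sqrt{-3})=\Q(\zeta_6)$ cases), but it makes self-contained, and in fact justifies, the equality $\textrm{End}^0_{\overline{k}}(X_{\overline{k}})=\textrm{End}^0_k(X)$ that the paper states without detail, while the paper's citation of Howe--Zhu is shorter.
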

\begin{proof}
Suppose first that there exists an algebraically closed field $K$ of characteristic $p>0$ and a simple abelian variety $X$ of dimension $3$ over $K$ with a polarization $\mathcal{L}$ such that $G=\textrm{Aut}_K (X,\mathcal{L}).$ Let $D=\textrm{End}_K^0(X)$. Then by Remark \ref{oort rmk}, $D$ is of one of the following types: \\
(i) $D=\Q$;\\
(ii) $D$ is a totally real field of degree $3$;\\
(iii) $D$ is a totally imaginary quadratic field;\\
(iv) $D$ is a totally imaginary quadratic extension field of a totally real field of degree $3$; \\
(v) $D$ is a central simple division algebra of degree $3$ over a totally imaginary quadratic field and the $p$-rank of $X$ is $0.$\\

For the cases (i), (ii), (iii), and (iv), $D$ is a field so that $G$ is cyclic. It follows from dimension counting that $G \in \{\Z/2\Z, \Z/4\Z, \Z/6\Z, \Z/14\Z, \Z/18\Z \}.$ For the case (v), $G$ is one of the 5 groups in the above table by Theorem \ref{thm 18}. Hence, it suffices to show the converse. We prove the converse by considering them one by one. \\

  (1) Take $G=\Z/2\Z.$ Let $k=\F_{125}$ and let $X$ be an abelian variety of dimension $3$ over $k$ such that $\pi_X$ is conjugate (as $125$-Weil numbers) to a zero of the quadratic polynomial $t^2 +5t+125 \in \Z[t]$, and that $\textrm{End}_k(X)$ is a maximal $\Z$-order, containing $\Z \left[ \frac{1+\sqrt{-19}}{2}\right]$, in a central simple division algebra $\textrm{End}_k^0 (X)$ of degree $3$ over $\Q(\sqrt{-19})$ (as in the proof of Theorem \ref{main thm}-(1)). By \cite[Propostion 3]{5}, $X_{\overline{k}}:=X \times_k \overline{k}$ is simple over $\overline{k}.$ Now, it is clear that $\textrm{End}_k(X) \subseteq \textrm{End}_{\overline{k}}(X_{\overline{k}}).$ Then since $\textrm{End}_{\overline{k}}(X_{\overline{k}})$ is a $\Z$-order in $\textrm{End}_{\overline{k}}^0(X_{\overline{k}})= \textrm{End}_k^0(X)$ and $\textrm{End}_k(X)$ is a maximal $\Z$-order in $\textrm{End}_k^0(X),$ it follows that $\textrm{End}_k(X)=\textrm{End}_{\overline{k}}(X_{\overline{k}}).$ Note also that $\Z/2\Z \cong \langle -1 \rangle \leq \Z \left[\frac{1+\sqrt{-19}}{2}\right]^{\times} \leq \textrm{End}_{\overline{k}}(X_{\overline{k}})^{\times} = \textrm{Aut}_{\overline{k}}(X_{\overline{k}}).$ \\

  Now, let $\mathcal{L}$ be an ample line bundle on $X_{\overline{k}}$, and put
  \begin{equation*}
    \mathcal{L}^{\prime}:=\bigotimes_{f \in \langle -1 \rangle} f^* \mathcal{L}.
  \end{equation*}
  Then $\mathcal{L}^{\prime}$ is an ample line bundle on $X_{\overline{k}}$ that is preserved under the action of $\langle -1 \rangle$ so that $\langle -1 \rangle \leq \textrm{Aut}_{\overline{k}}(X_{\overline{k}},\mathcal{L}^{\prime}).$ Also, since we have seen that $\langle -1 \rangle$ is a maximal finite subgroup of the multiplicative subgroup of $\textrm{End}_k^0(X)=\textrm{End}_{\overline{k}}^0(X_{\overline{k}})$ in the proof of Theorem \ref{main thm}-(1), it follows that
  \begin{equation*}
    G \cong \langle -1 \rangle =\textrm{Aut}_{\overline{k}}(X_{\overline{k}}, \mathcal{L}^{\prime}).
  \end{equation*}

  (2) Take $G=\Z/4\Z.$ Let $k=\F_{125}$ and let $X$ be an abelian variety of dimension $3$ over $k$ such that $\pi_X$ is conjugate (as $125$-Weil numbers) to a zero of the quadratic polynomial $t^2 +10t+125 \in \Z[t]$, and that $\textrm{End}_k(X)$ is a maximal $\Z$-order, containing $\Z \left[ \sqrt{-1} \right]$, in a central simple division algebra $\textrm{End}_k^0 (X)$ of degree $3$ over $\Q(\sqrt{-1})$ (as in the proof of Theorem \ref{main thm}-(2)). Then a similar argument as in (1) can be used to show that there is a polarization $\mathcal{L}$ on $X_{\overline{k}}$ such that $G=\textrm{Aut}_{\overline{k}}(X_{\overline{k}},\mathcal{L}).$ \\

  (3) Take $G=\Z/6\Z.$ Let $k=\F_{6859}$ and let $X$ be an abelian variety of dimension $3$ over $k$ such that $\pi_X$ is conjugate (as $6859$-Weil numbers) to a zero of the quadratic polynomial $t^2 +19t+6859 \in \Z[t]$, and that $\textrm{End}_k(X)$ is a maximal $\Z$-order, containing $\Z \left[ \frac{1+\sqrt{-3}}{2} \right]$, in a central simple division algebra $\textrm{End}_k^0 (X)$ of degree $3$ over $\Q(\sqrt{-3})$ (as in the proof of Theorem \ref{main thm}-(3)). Then a similar argument as in (1) can be used to show that there is a polarization $\mathcal{L}$ on $X_{\overline{k}}$ such that $G=\textrm{Aut}_{\overline{k}}(X_{\overline{k}},\mathcal{L}).$ \\

  (4) Take $G=\Z/14\Z.$ Let $k=\F_{1331}$ and let $X$ be an abelian variety of dimension $3$ over $k$ such that $\pi_X$ is conjugate (as $1331$-Weil numbers) to a zero of the quadratic polynomial $t^2 +44t+1331 \in \Z[t]$, and that $\textrm{End}_k(X)$ is a maximal $\Z$-order, containing $\Z \left[ \zeta_{14} \right]$, in a central simple division algebra $\textrm{End}_k^0 (X)$ of degree $3$ over $\Q(\sqrt{-7})$ (as in the proof of Theorem \ref{main thm}-(4)). Then a similar argument as in (1) can be used to show that there is a polarization $\mathcal{L}$ on $X_{\overline{k}}$ such that $G=\textrm{Aut}_{\overline{k}}(X_{\overline{k}},\mathcal{L}).$ \\

  (5) Take $G=\Z/18\Z.$ Let $k=\F_{343}$ and let $X$ be an abelian variety of dimension $3$ over $k$ such that $\pi_X$ is conjugate (as $343$-Weil numbers) to a zero of the quadratic polynomial $t^2 +7t+343 \in \Z[t]$, and that $\textrm{End}_k(X)$ is a maximal $\Z$-order, containing $\Z \left[ \zeta_{18} \right]$, in a central simple division algebra $\textrm{End}_k^0 (X)$ of degree $3$ over $\Q(\sqrt{-3})$ (as in the proof of Theorem \ref{main thm}-(5)). Then a similar argument as in (1) can be used to show that there is a polarization $\mathcal{L}$ on $X_{\overline{k}}$ such that $G=\textrm{Aut}_{\overline{k}}(X_{\overline{k}},\mathcal{L}).$ \\

  This completes the proof.
\end{proof}

\end{document}